\newtheorem{theorem}{Theorem}
\newtheorem{lemma}{Lemma}
\newtheorem{corollary}{Corollary}
\begin{document}
\title{Asymptotics of Queue Length Distributions in Priority Retrial Queues}
\author{Joris Walraevens\footnote{Department of Telecommunications and Information Processing (EA07), Ghent University, Gent, Belgium, Joris.Walraevens@UGent.be}, Dieter Claeys\footnote{Department of Industrial Systems Engineering and Product Design, Ghent University, Gent, Belgium, Dieter.Claeys@UGent.be} and Tuan Phung-Duc\footnote{Division of Policy and Planning Sciences, Faculty of Engineering, Information and Systems, University of Tsukuba, Tsukuba, Japan,  tuan@is.titech.ac.jp}}
\date{}
\maketitle

\begin{abstract}
We calculate asymptotics of the distribution of the number of customers in orbit in a two-class priority retrial $M/G/1$-type queueing model. In this model, priority customers wait in line while non-priority customers join an orbit and retry later. 
Although the generating function and moments of the number of customers in orbit has been analyzed before, asymptotics of the distribution have not been thoroughly investigated. We use singularity analysis of the probability generating function to do just that. Our results show that different regimes exist for these asymptotics in case of light-tailed service times: in what we call the `priority regime', the tail asymptotics have the same decay ($\sim cn^{-3/2}R^{-n}$) as in the priority non-retrial queue and the retrial rate only influences the constant $c$. In the `retrial regime', the retrial rate also influences the sub-exponential factor of the asymptotics. In this regime, asymptotics are very similar to asymptotics in retrial queues without (priority) waiting line. Finally, we also analyze the case that the service time distribution is power law (with or without exponential cut-off) using the same technique.
\end{abstract}

\section{Introduction}

Retrial queues are featured by the fact that arriving customers that cannot occupy the server upon arrival join a virtual waiting room (called orbit) and retry for service after some random time. Retrial phenomena naturally occur in various applications such as service systems e.g., call centers~\cite{Aguir04} and telecommunication systems, e.g., cellular networks~\cite{tran-gia97}. It is important to take retrial behavior into account in modeling of these systems. However, the flow of retrial customers makes the arrival process to the server complex and as a result the analysis of retrial queues is more difficult than that of the corresponding conventional models with an infinite buffer. 

On the other hand, priority is essential in service systems where service-differentiation is required. In fact, customers to service systems have different needs and restrictions. Some customers have urgent demands while relatively long waiting time is acceptable for other type of customers. For example, in hospitals, urgent patients should be treated as soon as possible while others can wait longer. So, queueing systems with priority are very important from a practical point of view. In the basic single server priority queue~\cite{Walraevens03}, there are two queues for two classes of customers. Upon the service completion of a customer, a customer of a lower priority class is served if and only if there is no customer of the high priority class present. 

The basic priority model is extended to a retrial setting in~\cite{Choi_Park90,Falin_Artalejo_Martin93,Dimitriou13, Kim_Kim17}. In particular, priority customers who find the server busy upon arrival can wait at the priority queue in front of the server while blocked lower priority customers join the orbit and retry to enter the server after some random time. In fact, such a priority retrial queue comes naturally if customers have the choice to either wait in line or retry later. They are also a perfect fit to model cellular networks. In cellular networks, the cover area is divided into cells and each cell is served by one base station. The base station serves both fresh and handover calls. Fresh calls are originated from the cell of the base station while handover calls arrive from adjacent cells. Since a handover call has been connecting with the base station of an adjacent cell, it should be served at the targeting cell as soon as possible and thus should be given priority. Furthermore, adjacent cells in cellular networks are overlapping and thus, in the overlapping area, a handover call can connect to both original and targeting base stations. When the handover call is in a overlap area, it could be considered that it is in the priority queue in our model. On the other hand, fresh calls are originated from the cell of the base station. So, when the base station is busy, fresh calls may be blocked and they may retry at a later time. Thus the orbit for our model are the pool for blocked fresh calls while the buffer is the overlapping  area in cellular networks for handover calls.

In the work on priority retrial queues mentioned above, the probability generating function (PGF) of the number of customers in orbit is obtained, albeit in a complicated form. As a result, calculated performance measures are restricted to moments and it is difficult to gain insight into the structure of their distributions. This motivated us to investigate the asymptotics of the distribution of the number of customers in orbit starting from its PGF. 

Queue length asymptotics for retrial queues have been paid much attention to in the literature. Queue length asymptotics for the single class single server retrial queue (without an infinite buffer in front of the server) have been investigated in~\cite{Kim_Ko07,Yamamuro12,Shang06}. In~\cite{Kim_Ko07}, asymptotics of the queue length of the M/G/1 retrial queue are presented under light-tailed assumption of the service time distribution, while in~\cite{Shang06} and \cite{Yamamuro12} tail asymptotics of the queue length are derived under heavy tailed assumption of the service time distribution for the M/G/1 and M$^X$/G/1 retrial queues, respectively. In~\cite{Artalejo_Phung13}, tail asymptotics for the queue length are obtained for the M/G/1 retrial queue with two-way communication where the server makes outgoing calls in its idle time. In~\cite{Artalejo_Phung13}, the service time distributions of incoming calls and outgoing calls follow an arbitrary distribution. 

Asymptotics of queue length and waiting time distributions have also been investigated thoroughly for priority queues. It has been established that queue length and waiting time distributions in priority queues with light-tailed service times follow one of three laws: exponential $\sim R^{-n}$, power law with exponential cut-off ($\sim n^{-3/2}R^{-n}$), or a border case ($\sim n^{-1/2}R^{-n}$), cf.\ \cite{Abate97,Walraevens03,Laevens98}. This is later extended to asymptotics of the joint distribution in the preemptive and non-preemptive $M/M/1$ priority queues in \cite{Li_Zhao09,Li_Zhao11b} and to asymptotics of the total system content in the corresponding discrete-time non-preemptive queue with batch arrivals \cite{Maertens07}.

To the best of our knowledge, asymptotics of the number of customers in orbit in a priority retrial queue has not been investigated in the literature. It should be noted that the underlying stochastic process of our priority retrial queue is two-dimensional where the first component is the number of customers in the priority queue and the second component keeps track of the number of customers in the orbit. Therefore, our model and results are connected to asymptotic analysis of two-dimensional Markov chains~\cite{Li_Zhao11b,Li_Zhao11a,Song15,Kobayashi_Miyazawa14,He_Li_Zhao09,Haque_Zhao_Liu05,Kobayashi10}. 
As will also be apparent from our results, these studies on two-dimensional random walks in the first quadrant show that distributions of queue lengths are not necessarily asymptotically exponential. However, the service times in our model follow an arbitrary distribution, including heavy-tailed distributions, and therefore the underlying stochastic process of our model does not fall into the framework of two-dimensional Markov chains. Furthermore, since we consider the classical retrial policy in which the retrial rate is proportional to the number of customers in the orbit, the transition structure of our stochastic process is non-homogeneous which is also not the case in the work mentioned above. 

In this paper, we investigate the singularities of the PGFs derived in~\cite{Choi_Chang99,Choi_Park90}, and the behavior of these PGFs in the neighborhood of these singularities to obtain asymptotics for the number of customers in the orbit (when the server is free, when the server is busy, and unconditionally). We detail the model and write down the expressions of the PGFs we start from in the next section. In Section \ref{approach}, we convey our general approach which exists of analyzing the behavior of the PGF in the neighborhood of its dominant singularity (singularity with smallest norm) and using standard transfer formulas to calculate the asymptotics of the corresponding distributions. In Section \ref{exponential}, we demonstrate this approach in great detail to obtain asymptotics when service times are light-tailed (more precisely if the Laplace-Stieltjes Transform (LST) $B^*(s)$ is infinite in its dominant singularity). The main complication here is finding the dominant singularity and the correct asymptotic formula of the PGF in the neighborhood of this singularity. This is due to the complex expression of the PGF. We show that we obtain two regimes for the asymptotics (and a third border regime). In the first regime, the asymptotics have the same structure as that in the conventional priority queue while in the second regime the retrial rate plays an important role. Finally, in Section \ref{power_law}, we consider the case that the singularity of the LST of the service times plays a more crucial role. This is the case when service times are power law, but also in a specific case where the service times are power law with an exponential cut-off. We fully investigate the asymptotics of the number of customers in orbit, using the same technique as in Section 4. This seems to be novel as alternative techniques (Tauberian theorems, large deviations) are usually adopted for the latter case \cite{Zwart01}.

\section{Model and Preliminary Results}

We assume an M/G/1 retrial queue with two types of customers. Type-I customers arrive in the system according to a Poisson process with rate $\lambda_1$, occupy the server when it is free or join an (infinitely-sized) regular queue when the server is busy. Type-II customers arrive in the system according to a Poisson process with rate $\lambda_2$, occupy the server when it is free, or join an (infinitely-sized) orbit when it is busy. Service times are generally distributed with distribution $B(\cdot)$, density $b(x)$, Laplace transform $B^*(s)$ and mean $b$. When a service ends, the served customer departs the system and a new customer from the (regular) queue (if any) enters the server. Within the regular queue the service order is First-Come-First-Served (FCFS). The customers in the orbit retry to access the server according to independent Poisson processes with rate $\nu$, and succeed only when the server is available at the moment of retrial. It is clear from above description that type-I customers have strict non-preemptive priority over type-II customers, since the latter can only access the server when no type-I customers are present. 

This model has been analyzed extensively in \cite{Choi_Park90,Choi_Chang99}. Theorem 4.1 in \cite{Choi_Chang99} contains expressions for the partial PGFs of the number of customers in both queues (not counting the one in service) when the server is either busy or idle. Note that the number of customers in the regular queue is zero when the server is idle. The formulas read:
\begin{align}
Q(z)={} & (1-\rho)\exp\left[-\dfrac{\lambda}{\nu}\int_{z}^1\dfrac{1-h(x)}{h(x)-x}dx\right], \label{Q} \\
P^*(0,z_1,z_2)={} & (1-\rho)\exp\left[-\dfrac{\lambda}{\nu}\int_{z_2}^1\dfrac{1-h(x)}{h(x)-x}dx\right]\dfrac{\lambda-\lambda B^*(\lambda_1(1-z_1)+\lambda_2(1-z_2))}{\lambda_1(1-z_1)+\lambda_2(1-z_2)} \nonumber \\
& \cdot\dfrac{z_1-h(z_2)}{z_1-B^*(\lambda_1(1-z_1)+\lambda_2(1-z_2))}\dfrac{1-z_2}{h(z_2)-z_2}, \nonumber
\end{align}
with $Q(z)$ the partial PGF of the number of customers in orbit when the server is idle and $P^*(0,z_1,z_2)$ the partial joint PGF of the number of customers in queue and in orbit when the server is busy, $\lambda:=\lambda_1+\lambda_2$ the total arrival rate, $\rho:=\lambda b$ the total arrival load and $h(z_2)$ the unique root in $z_1$ with $|z_1|<1$ of the equation $z_1-B^*(\lambda_1(1-z_1)+\lambda_2(1-z_2))=0$ for given $z_2$ with $|z_2|<1$.

We are, in particular, interested in the asymptotics of the number of type-II customers present in the orbit. We will asymptotically invert three PGFs, namely, (i) the partial PGF $Q(z)$ of the number of type-II customers in orbit when the server is idle, given by equation \eqref{Q}, (ii) the partial PGF of the number of type-II customers in orbit when the server is busy, expressed by
\begin{align}
R(z):={} & P^*(0,1,z) \nonumber \\
={} & (1-\rho)\exp\left[-\dfrac{\lambda}{\nu}\int_{z}^1\dfrac{1-h(x)}{h(x)-x}dx\right]\dfrac{\lambda}{\lambda_2}\dfrac{1-h(z)}{h(z)-z}, \label{R}
\end{align}
and, (iii) the PGF of the number of type-II customers in orbit
\begin{align}
P_2(z):={} & Q(z)+R(z) \nonumber \\
={} & (1-\rho)\exp\left[-\dfrac{\lambda}{\nu}\int_{z}^1\dfrac{1-h(x)}{h(x)-x}dx\right]\left\{1+\dfrac{\lambda}{\lambda_2}\dfrac{1-h(z)}{h(z)-z}\right\}. \label{P2}
\end{align}

\section{Problem Statement and Approach} \label{approach}

We are interested in the asymptotics of the (partial) distribution of the stationary number of class-II customers in orbit. All information of a discrete distribution is, in theory, incorporated in its PGF. In this section, we will disclose our general approach to calculate asymptotics of a distribution from its PGF. 

Assume a generic (possibly defective) discrete distribution $\{f_n\}_{n=0}^{\infty}$ with PGF $f(z)$. The $f_n$ are probabilities and thus satisfy two conditions: $f_n\geq0$ and $\sum_{n=0}^{\infty}f_n\leq1$. The PGF $f(z)$ is an analytic function at $0$ with Taylor expansion
\begin{align}
f(z)=\sum_{n=0}^{\infty}f_nz^n, \label{PGF}
\end{align}
i.e., the coefficients of its Taylor expansion correspond to the distribution.

Our intention is the asymptotic calculation of the distribution $\{f_n\}_{n=0}^{\infty}$ for $n\rightarrow\infty$. This is identical to the asymptotic calculation of the coefficients $\{[z^n]f(z)\}_{n=0}^{\infty}$ of the Taylor expansion of its PGF $f(z)$, cf.\ \eqref{PGF}.\footnote{$[z^n]f(z)$ denotes the $n$-th coefficient of the Taylor expansion of $f(z)$.} The latter can be done through singularity analysis of that function $f(z)$. Flajolet and Sedgewick \cite{Flajolet08} study this problem comprehensively. They state that asymptotics can generally be written as 
\begin{align}
[z^n]f(z)\sim{} R^{-n}\theta(n), \label{inversion}
\end{align}
for $n\rightarrow\infty$, corresponding to an exponential factor $R^{-n}$ modulated by a subexponential tame factor $\theta(n)$, where we write $f_n\sim g_n$ if $n\rightarrow\infty$ if $\lim_{n\rightarrow\infty}g_n/f_n=1$. These two factors in \eqref{inversion} correspond to what Flajolet and Sedgewick call the two principles of coefficient asymptotics \cite{Flajolet08}, p.\ 227: 
\begin{enumerate}
\item The \textit{location} of a function's 
singularities dictates the exponential growth ($R^{-n}$) of its coefficients. 
\item The \textit{nature} of a function's singularities determines the associate subexponential factor $(\theta(n))$.
\end{enumerate}

The factor $R^{-n}$ results from the easy rescaling rule
\begin{align}
[z^n]f(z)={} & R^{-n}[z^n]f(Rz). \label{rescale}
\end{align}
Choosing $R$ as a singularity of $f(z)$, the normalized function $f(Rz)$ is singular at $1$. The second factor arises from singularity analysis of the normalized function in $1$. This factor turns out to be sub-exponential due to \cite{Flajolet08}, corollary VI.1, p.\ 392. This corollary states that for a function $f(z)$ that is analytic in a so-called $\Delta$-domain at $1$ the following property holds:
\begin{align}
f(z)\sim(1-z)^{-\alpha_f} \mbox{ as } z\rightarrow1, \, z\in\Delta \qquad \Rightarrow \qquad [z^n]f(z)\sim{} \dfrac{n^{\alpha_f-1}}{\Gamma(\alpha_f)} \mbox{ as } n\rightarrow\infty, \label{coefficient}
\end{align} 
where we write $f(z)\sim g(z)$ as $z\rightarrow1, z\in \Delta$ if the ratio $g(z)/f(z)$ tends to $1$ as $z\rightarrow1$ in $\Delta$. A $\Delta$-domain at $1$ is a domain $\Delta=\{z \big{|} \, |z|<S, z\neq1,|\arg(z-1)|>\phi\}$ for some $S>1$ and $0<\phi<\pi/2$. In the remainder, we will write $z\rightarrow 1$ and implicitly assume the definition of a $\Delta$-domain at $1$ and $z\in\Delta$.\footnote{We can extend this to a $\Delta$-domain to any complex number $\zeta$ by the mapping $z\rightarrow \zeta z$.} 

Each singularity of $f(z)$ yields a term in the coefficients as \eqref{coefficient}, since contributions of multiple singularities are to be added up (\cite{Flajolet08}, section VI.5, p.\ 398). However, due to the exponential factor $R^{-n}$ the ones with smallest norm dominate (hence the name \textit{dominant singularities}). For a function $f$ that has a Taylor series with non-negative coefficients (which is the case for PGFs), Pringsheim's theorem\footnote{Also known as Vivanti's theorem or the Vivanti-Pringsheim theorem.} (\cite{Flajolet08}, Theorem VI.6. p.\ 240) states that one of the dominant singularities is the real positive point $z=R_f$, with $R_f$ the radius of convergence of $f$. Furthermore, $R_f\geq1$ since $\sum_{n=0}^{\infty}f_n$ converges. In principle, other singularities may lie on the radius of convergence, but these lead to some periodicity of the coefficients, cf.\ \cite{Flajolet08}, section IV.6.1, p.\ 263. Since buffer occupancy distributions do usually not demonstrate periodicity, we will assume aperiodicity of our sequences, or, equivalently, it is assumed that only $R_f$ is a dominant singularity of the corresponding PGF $f(z)$. 

Summarized, we have that
\begin{align}
f(z)\sim c_f\left(1-\dfrac{z}{R_f}\right)^{-\alpha_f} \mbox{ as } z\rightarrow R_f \qquad \Rightarrow \qquad [z^n]f(z)\sim{} c_f\dfrac{n^{\alpha_f-1}}{\Gamma(\alpha_f)}R_f^{-n} \mbox{ as } n\rightarrow\infty, \nonumber
\end{align} 
or, equivalently,
\begin{align}
f(z)\sim c_f\left(R_f-z\right)^{-\alpha_f} \mbox{ as } z\rightarrow R_f \qquad \Rightarrow \qquad [z^n]f(z)\sim{} c_f\dfrac{n^{\alpha_f-1}}{\Gamma(\alpha_f)}R_f^{-n-\alpha_f} \mbox{ as } n\rightarrow\infty. \label{asymptotic_inversion}
\end{align} 

Our approach can be summarized as: we write expressions \eqref{Q}-\eqref{P2} as in the left side of \eqref{asymptotic_inversion} to find the asymptotics of the corresponding distribution as in the right side of \eqref{asymptotic_inversion}.

As in \cite{Abate97}, we identify three types of service time distributions, based on the dominant singularity $-R_{B^*}$ of its LST $B^*(s)$ and on the behavior of the LST in the neighborhood of that singularity\footnote{We assume $R_{B^*}=\infty$ if $B^*(s)$ is analytic in the complex plane.}, namely
\begin{enumerate}
\item Type-1: $R_{B^*}>0$ and $B^*(-R_{B^*})=\infty$,
\item Type-2: $R_{B^*}>0$ and $1<B^*(-R_{B^*})<\infty$,
\item Type-3: $R_{B^*}=0$ and $B^*(-R_{B^*})=1$.
\end{enumerate}
Distributions whose LST goes to $\infty$ for $s\rightarrow-R_{B^*}$ are type-1 distributions. These include asymptotically exponential distributions ($-{R_{B^*}}$ is a pole of $B^*(s)$) but also some distributions that are power law with exponential cut-off ($B^*(s)\sim c_{B^*}(s+R_{B^*})^{-\alpha_{B^*}}$ for $z\rightarrow -R_{B^*}$ with $\alpha_{B^*}>0$ and not an integer). Power-law distributions with exponential cut-off are distributions that are power law for smaller values, but that smoothly turns into an exponentially declining function for larger values \cite{Milojevic10}. The other part of these distributions are of type 2 ($B^*(s)\sim B^*(-R_{B^*})+c_{B^*}(s+R_{B^*})^{-\alpha_{B^*}}$ for $z\rightarrow -R_{B^*}$ with $\alpha_{B^*}<0$ and not an integer). Finally, power-law distributions are examples of type-3 distributions, as the dominant singularity of their LST is $0$ ($B^*(s)\sim 1+c_{B^*}s^{-\alpha_{B^*}}$ for $z\rightarrow 0$ with $\alpha_{B^*}<0$ and not an integer).

\section{Type 1 Service Time Distributions} \label{exponential}

We use singularity analysis of PGFs to calculate the asymptotics of the corresponding distributions. We therefore calculate asymptotic expressions of \eqref{Q}-\eqref{P2} in the neighborhood of their respective dominant singularities, which are also their radii of convergence. This is the main contribution of this article. Once this suitable form is found, it is a mere application of singularity analysis (cf.\ \eqref{asymptotic_inversion}) to find the asymptotics of the corresponding distributions. 

In this section, we assume the service times to be of type 1, also called `well-behaved' functions in \cite{Abate97}. We analyze some cases where the service times are of type 2 or type 3 in the next section. Throughout the general analysis, we assume $\lambda_1,\lambda_2>0$. The case $\lambda_1=0$ is treated later as a special case.

\subsection{Asymptotic Inversion of $Q(z)$}

We start with the asymptotic inversion of expression \eqref{Q}. To this end, we write $Q(z)$ in the suitable form $c_Q\cdot(R_Q-z)^
{-\alpha_Q}$ as explained in section \ref{approach}. We do this step by step by first studying asymptotic behavior of functions $Q$ is composed of in the neighborhood of their dominant singularity. In some functions, we will also need information on the error term, which will be denoted by $O(\cdot)$, with $f=O(g)$ if $\limsup |f|/|g|<\infty$.

We commence with singularity analysis of the implicitly defined function $h(z)$.
\begin{lemma}
The dominant singularity $R_h$ of the implicit function $h(z)$, defined as the unique root in $w$ with $|w|<1$ of the equation $w=B^*(\lambda_1(1-w)+\lambda_2(1-z))$ for $|z|<1$, is the smallest positive real solution of 
\begin{align}
-\lambda_1{B^*}'(\lambda_1(1-h(z))+\lambda_2(1-z))=1. \label{deriv_functional}
\end{align}
The function $h(z)$ is equal to
\begin{align}
h(z)={} & h(R_h)-c_h(R_h-z)^{1/2}+O(R_h-z), \label{asymptotic_h}
\end{align}
as $z\rightarrow R_h$. The constant $c_h$ is given by
\begin{align}
c_h={} & \sqrt{\dfrac{2\lambda_2}{\lambda_1^3{B^*}''(\lambda_1(1-h(R_h))+\lambda_2(1-R_h))}}. \label{c_h}
\end{align}
\label{lemma_h}
\end{lemma}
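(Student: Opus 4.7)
The plan is to treat $h(z)$ via the implicit function theorem applied to $F(w,z) := w - B^*(\lambda_1(1-w)+\lambda_2(1-z))$. Since $F$ is analytic on a neighborhood of $\{(w,z) : \Re(\lambda_1(1-w)+\lambda_2(1-z)) > -R_{B^*}\}$ and $F_w(w,z) = 1 + \lambda_1 {B^*}'(\lambda_1(1-w)+\lambda_2(1-z))$, the function $h(z)$ continues analytically along the real axis from $z=1$ (where $h(1)=1$, noting $F_w(1,1) = 1 - \lambda_1 b > 0$ by stability) as long as $F_w$ stays nonzero. The candidate singularity $R_h$ must therefore be the smallest real $z > 1$ at which $F_w(h(z),z) = 0$, i.e., the smallest positive real solution of \eqref{deriv_functional}.

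Existence of such an $R_h$ is the first substantive step. Along the real branch, $s(z) := \lambda_1(1-h(z))+\lambda_2(1-z)$ decreases as $z$ increases (one can differentiate the defining equation implicitly to confirm $h'(z) > 0$ beyond $z=1$ and check the monotonicity of $s$), while $-\lambda_1 {B^*}'(s)$ strictly increases from the value $\lambda_1 b < 1$ at $s=0$ toward $+\infty$ as $s \downarrow -R_{B^*}$ (using the type-1 assumption that $B^*(-R_{B^*}) = \infty$, which forces ${B^*}'$ to be unbounded as well). By continuity and the intermediate value theorem, there is a unique $R_h > 1$ where \eqref{deriv_functional} holds, and by the argument above this is genuinely the first point at which analytic continuation fails. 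Since $h$ has nonnegative Taylor coefficients (it is the PGF of a first-passage-type quantity), Pringsheim's theorem ensures the dominant singularity lies on the positive real axis, identifying it as $R_h$.

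For the local expansion \eqref{asymptotic_h}, I would expand $F$ to second order around $(w_0, z_0) := (h(R_h), R_h)$. Using $F(w_0,z_0) = 0$ and $F_w(w_0,z_0) = 0$, computing $F_z(w_0,z_0) = \lambda_2 {B^*}'(s_0) = -\lambda_2/\lambda_1$ (from \eqref{deriv_functional}) and $F_{ww}(w_0,z_0) = -\lambda_1^2 {B^*}''(s_0)$, the equation $F(w,z) = 0$ reduces locally to
\begin{align*}
-\frac{\lambda_2}{\lambda_1}(z - R_h) - \frac{\lambda_1^2 {B^*}''(s_0)}{2}(w - w_0)^2 + O\bigl((w-w_0)^3 + (z-R_h)(w-w_0)\bigr) = 0.
\end{align*}
Solving for $w - w_0$ and selecting the branch that keeps $h(z) < h(R_h)$ for real $z < R_h$ (dictated by monotonicity of $h$ on $[1,R_h]$) yields the square-root behavior with $c_h$ as in \eqref{c_h}, together with the stated $O(R_h - z)$ error term from the next order of the Weierstrass preparation.

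The main obstacle is the first step: rigorously arguing that $R_h$ as defined by \eqref{deriv_functional} is actually the dominant singularity rather than merely a candidate. One has to rule out earlier singularities coming either from the argument of $B^*$ hitting $-R_{B^*}$ before $F_w$ vanishes, or from collisions of branches of $h$ off the real axis. The first is handled by noting that the type-1 condition forces ${B^*}'$ to blow up at $-R_{B^*}$, so \eqref{deriv_functional} is solved strictly inside the domain of analyticity of $B^*$; the second is handled by Pringsheim together with the monotone structure on $[1, R_h]$. Once this is in place, the expansion is a routine application of the analytic implicit function theorem at a simple quadratic branch point.
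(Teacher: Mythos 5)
Your approach matches the paper's: both use the smooth implicit-function framework of Flajolet and Sedgewick and identify the square-root branch point where the $w$-derivative of the defining relation vanishes. You carry out the local quadratic (Weierstrass) expansion by hand to extract $c_h$, whereas the paper cites Theorem VII.3 of Flajolet--Sedgewick for the same conclusion, and your intermediate-value existence argument is essentially the one the paper relegates to its appendix.
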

\begin{proof}
The function $h(z)$ is implicitly defined as
\begin{align}
h(z)={} & G(z,h(z)), \nonumber
\end{align}
with 
\begin{align}
G(z,w)={} & B^*(\lambda_1(1-w)+\lambda_2(1-z)). \label{G}
\end{align}
It is a variation of the so-called smooth implicit-function schema (\cite{Flajolet08}, Definition VII.4, p.\ 467) and $h(z)$ contains therefore a square-root singularity in the point $R_h$, with $(R_h,h(R_h))$ the solution of the characteristic system in $(z,w)$:
\begin{align}
G(z,w)={} & w, \label{characteristic1} \\
G_w(z,w)={} & 1, \label{characteristic2} 
\end{align}
with $G_w(z,w)$ the partial derivative of $G$ in $w$. The first equation is the implicit function definition of $h(z)$; the second equation leads to \eqref{deriv_functional}. We discuss the smooth implicit-function schema and the existence of such a square-root singularity of an implicit function in the Appendix.

According to \cite{Flajolet08}, Theorem VII.3, p.\ 468, $h(z)$ converges at $z=R_h$ as written down in \eqref{asymptotic_h} with
\begin{align}
c_h={} & \sqrt{\dfrac{2G_z(R_h,h(R_h))}{G_{ww}(R_h,h(R_h))}}. \nonumber
\end{align}
After some algebra and use of \eqref{deriv_functional}, this latter equation leads to expression \eqref{c_h}.
\end{proof}

Next, we develop singularity analysis of the integrand of the integral in \eqref{Q}. Different cases are observed.
\begin{lemma}
The dominant singularity  $R_{f_1}$ of
\begin{align}
f_1(z) := \dfrac{1-h(z)}{h(z)-z} \label{f1}
\end{align}
is equal to the smallest positive zero $R^*$ larger than $1$ of $h(z)-z$, if any, or $R_h$, if not. The asymptotic behavior of $f_1(z)$ near that dominant singularity is one of three cases:
\begin{align}
f_1(z)={} & \left\{\begin{array}{cc}
\dfrac{c^{(1)}_{f_1}}{R_{f_1}-z}+O(1) & \mbox{ if } R_{f_1}=R^*<R_h \\
f_1(R_{f_1})-c^{(2)}_{f_1}(R_{f_1}-z)^{1/2}+O(R_{f_1}-z) & \mbox{ if } R_{f_1}=R_h \mbox{ and } R^* \mbox{ does not exist} \\
\dfrac{c^{(3)}_{f_1}}{(R_{f_1}-z)^{1/2}}+O(1) & \mbox{ if } R_{f_1}=R^*=R_h \\
\end{array}\right., \nonumber
\end{align}  
as $z\rightarrow R_{f_1}$. The constants in the above formula are given by
\begin{align}
c^{(1)}_{f_1}={} &  \dfrac{(1-R_{f_1})(1+\lambda_1{B^*}'(\lambda(1-R_{f_1})))}{1+\lambda{B^*}'(\lambda(1-R_{f_1}))}, \nonumber \\
c^{(2)}_{f_1}={} & \dfrac{R_{f_1}-1}{(h(R_{f_1})-R_{f_1})^2}c_h, \nonumber \\
c^{(3)}_{f_1}={} & \dfrac{R_{f_1}-1}{c_h}. \nonumber
\end{align} 
\label{lemma_f_1}
\end{lemma}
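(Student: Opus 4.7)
The plan is to decompose the analysis by isolating the two possible sources of singularities of $f_1$ and then extracting the leading-order behavior of the quotient at the dominant one. The denominator $h(z)-z$ may vanish, and the function $h(z)$ appearing in the numerator has a square-root branch point at $R_h$ by Lemma~\ref{lemma_h}. A preliminary observation is that $z=1$ is always a common zero of numerator and denominator since $h(1)=1$, but a single application of l'Hôpital's rule shows that under the stability condition (which gives $h'(1)=\rho_2/(1-\rho_1)<1$) this singularity is removable; thus the relevant zeros of $h(z)-z$ are those strictly greater than $1$. By Pringsheim's theorem applied to the non-negative Taylor coefficients of $f_1$, the dominant singularity lies on the positive real axis, so it is either the smallest such zero $R^*$ (when it exists) or the branch point $R_h$.

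The three stated cases correspond precisely to the relative positions of $R^*$ and $R_h$. In case~1 ($R^*<R_h$), $h(z)$ is analytic at $R^*$, and $f_1$ has a simple pole. Implicit differentiation of $h(z)=B^*(\lambda_1(1-h(z))+\lambda_2(1-z))$ yields $h'(z)=-\lambda_2{B^*}'(\cdot)/(1+\lambda_1{B^*}'(\cdot))$, and evaluating at $R^*$ (where the argument reduces to $\lambda(1-R^*)$) gives the factor $h'(R^*)-1$. Combining this with $1-h(R^*)=1-R^*$ and the expansion $h(z)-z=(h'(R^*)-1)(z-R^*)+O((R^*-z)^2)$ produces $c^{(1)}_{f_1}$. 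In case~2 ($R_{f_1}=R_h$ with $h(R_h)\neq R_h$), I substitute the expansion \eqref{asymptotic_h} into both numerator and denominator and expand the quotient to first order in $u:=(R_h-z)^{1/2}$; using the identity $(1-h(R_h))+(h(R_h)-R_h)=1-R_h$ to collect the coefficient of $u$ yields $c^{(2)}_{f_1}$. In case~3 ($R^*=R_h$, which additionally forces $h(R_h)=R_h$), the denominator now lacks a constant term, and dividing $1-R_h+O(u)$ by $-c_h u+O(u^2)$ directly exposes the $(R_h-z)^{-1/2}$ singularity with coefficient $c^{(3)}_{f_1}$.

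The main obstacle I anticipate is not the algebra, which is routine once Lemma~\ref{lemma_h} is in hand, but the analytic justification that $f_1$ continues to a $\Delta$-domain at $R_{f_1}$ so that the transfer formulas of Section~\ref{approach} actually apply. The only obstructions are the isolated simple pole at $R^*$ and the branch cut at $R_h$, each of which is compatible with a $\Delta$-domain by the smooth implicit-function schema used to analyze $h$. A secondary subtlety is verifying, in case~2, that no hidden cancellation reduces the order of the subexponential term; this is ruled out because $R_h>1$ and $c_h\neq 0$, so the coefficient of $u$ in the expansion is genuinely nonzero.
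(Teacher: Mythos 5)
Your proof is correct and follows essentially the same route as the paper: split on whether the dominant singularity of $f_1$ is the pole $R^*$ coming from a zero of $h(z)-z$ or the branch point $R_h$ inherited from Lemma~\ref{lemma_h}, then expand the quotient locally in each of the three cases. The extra observations you add (removability of the common zero at $z=1$, the implicit-differentiation identity $h'(z)=-\lambda_2 B^{*\prime}/(1+\lambda_1 B^{*\prime})$ behind $c^{(1)}_{f_1}$, the $\Delta$-domain caveat, and the check that $c_h\neq 0$ so no cancellation occurs in case~2) are all sound and, if anything, make the argument slightly more careful than the paper's, which instead invokes convexity of $h$ to assert $h'(R^*)>1$.
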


\begin{proof}
The singularity $R_h$ of $h(z)$ carries over to $f_1(z)$. A second potentially dominant singularity of $f_1(z)$ is the smallest positive zero larger than $1$ of its denominator. Say that zero is denoted by $R^*$, if it exists. Note that $h(z)$ is analytic for $|z|<R_h$ and, therefore $R^*\leq R_h$ if $R^*$ exists. We have three cases, depending on the relative positions of $R_h$ and $R^*$ and whether the latter exists:
\begin{enumerate}
\item $R^*$ exists and is smaller than $R_h$: in this case, $R_{f_1}=R^*$ and this is a pole with multiplicity $1$. The multiplicity is due to $h'(R^*)>1$, which is a result of $h(z)$ being a convex function, $h(1)=1$, $h'(1)=\rho_2/(1-\rho_1)<1$ and $R^*>1$. This leads to the first case in the lemma.
\item $R^*$ does not exist: in this case, $R_{f_1}=R_h$ and we can use equation \eqref{asymptotic_h} to write
\begin{align}
f_1(z)\sim{} & \dfrac{1-h(R_{f_1})+c_h(R_{f_1}-z)^{1/2}+O(R_{f_1}-z)}{h(R_{f_1})-c_h(R_{f_1}-z)^{1/2}+O(R_{f_1}-z)-R_{f_1}} \nonumber \\
={} & (1-h(R_{f_1})+c_h(R_{f_1}-z)^{1/2}+O(R_{f_1}-z))\left(\dfrac{1}{h(R_{f_1})-R_{f_1}}+\dfrac{c_h(R_{f_1}-z)^{1/2}}{(h(R_{f_1})-R_{f_1})^2}+O(R_{f_1}-z)\right) \nonumber \\
={} & f_1(R_{f_1})-c^{(2)}_{f_1}(R_{f_1}-z)^{1/2}+O(R_{f_1}-z), \nonumber
\end{align}
with the constant $c_{f_1}^{(2)}$ as in the lemma. We expanded the denominator in the second step and kept the first two terms, while we kept the dominating terms of the expansion of the resulting product in the third step.
\item $R^*$ exists and is equal to $R_h$: this is a boundary case of the other two cases. We can use equation \eqref{asymptotic_h} and $h(R_{f_1})=R_{f_1}$ to write
\begin{align}
f_1(z)={} & \dfrac{1-R_{f_1}+c_h(R_{f_1}-z)^{1/2}+O(R_{f_1}-z)}{R_{f_1}-c_h(R_{f_1}-z)^{1/2}+O(R_{f_1}-z)-z} \nonumber \\
={} & \dfrac{R_{f_1}-1}{c_h(R_{f_1}-z)^{1/2}}+O(1), \nonumber
\end{align}
which equals the third case of the lemma.
\end{enumerate}
\end{proof}

The next function we analyze asymptotically in the neighborhood of its singularity is the integral in \eqref{Q}.
\begin{lemma}
The dominant singularity $R_{f_2}$ of
\begin{align}
f_2(z) := -\dfrac{\lambda}{\nu}\int_{z}^1\dfrac{1-h(x)}{h(x)-x}dx \nonumber
\end{align}
is equal to $R_{f_1}$. The asymptotic behavior of $f_2(z)$ near that dominant singularity is one of three cases:
\begin{align}
f_2(z)={} & \left\{\begin{array}{llc}
 -\dfrac{\lambda}{\nu}c^{(1)}_{f_1} & \ln(R_{f_2}-z)+\dfrac{\lambda}{\nu}c^{(1)}_{f_1}\ln(R_{f_2}-1) & \\
& -\dfrac{\lambda}{\nu}\int_{R_{f_2}}^1\left(f_1(x)-\dfrac{c^{(1)}_{f_1}}{R_{f_2}-x)}\right)dx+O(R_{f_2}-z) & \mbox{ if } R_{f_2}=R^*<R_h \\
f_2(R_{f_2}) & -\dfrac{\lambda}{\nu}f_1(R_{f_2})(R_{f_2}-z)+\dfrac{2\lambda}{3\nu}c^{(2)}_{f_1}(R_{f_2}-z)^{3/2} & \\
& +O((R_{f_2}-z)^2) & \mbox{ if } R_{f_2}=R_h \mbox{ and } R^* \mbox{ does not exist} \\
f_2(R_{f_2}) & -\dfrac{2\lambda}{\nu}c^{(3)}_{f_1}(R_{f_2}-z)^{1/2}+O(R_{f_2}-z) & \mbox{ if } R_{f_2}=R^*=R_h \\
\end{array}\right., \label{asymptotics_f2(z)}
\end{align}
with the constants as in Lemma \ref{lemma_f_1}.
\label{lemma_f_2}
\end{lemma}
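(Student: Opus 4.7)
The plan is to integrate the asymptotic expansion of $f_1$ from Lemma \ref{lemma_f_1} term-by-term, exploiting the elementary identity
\begin{align}
f_2(z) = -\dfrac{\lambda}{\nu}\int_z^1 f_1(x)\,dx. \nonumber
\end{align}
Since $R_{f_2}$ inherits the dominant singularity from $f_1$ and $f_1$ is analytic on $[1,R_{f_2})$, the integral is well-defined there, and the behaviour of $f_2$ as $z\to R_{f_2}$ is entirely controlled by that of $f_1$ near $R_{f_2}$.

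Cases 2 and 3 are the easier ones, because the singularity of $f_1$ at $R_{f_2}=R_h$ (a square root, respectively an inverse square root) is locally integrable. Hence $f_2(R_{f_2})=-\frac{\lambda}{\nu}\int_{R_{f_2}}^1 f_1(x)\,dx$ is a finite constant and
\begin{align}
f_2(z)-f_2(R_{f_2})=-\dfrac{\lambda}{\nu}\int_z^{R_{f_2}}f_1(x)\,dx. \nonumber
\end{align}
Substituting the relevant expansion from Lemma \ref{lemma_f_1} and using $\int_z^{R_{f_2}}(R_{f_2}-x)^{k}\,dx=\frac{1}{k+1}(R_{f_2}-z)^{k+1}$ for $k\in\{-1/2,0,1/2\}$ produces the stated expansions. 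The remainder terms follow from integrating the $O(\cdot)$ estimates: an $O(1)$ error integrates to $O(R_{f_2}-z)$, while an $O(R_{f_2}-x)$ error integrates to $O((R_{f_2}-z)^2)$.

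Case 1 is a bit more delicate because $f_1$ has a simple pole at $R_{f_2}=R^*$, so the naive integral diverges logarithmically. The standard remedy is to subtract and add the principal part, writing $f_1(x)=c^{(1)}_{f_1}/(R_{f_2}-x)+g(x)$, where Lemma \ref{lemma_f_1} guarantees that $g$ is bounded near $R_{f_2}$ (in fact analytic on $[1,R_{f_2}]$ after removing the pole). The pole piece integrates in closed form on $[z,1]$, producing the logarithmic term $-\frac{\lambda}{\nu}c^{(1)}_{f_1}\ln(R_{f_2}-z)$ together with the constant $\frac{\lambda}{\nu}c^{(1)}_{f_1}\ln(R_{f_2}-1)$. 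The bounded piece yields $-\frac{\lambda}{\nu}\int_{R_{f_2}}^1 g(x)\,dx+O(R_{f_2}-z)$, which reproduces the first line of \eqref{asymptotics_f2(z)}.

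The main obstacle will be justifying that the $O(\cdot)$ remainders from Lemma \ref{lemma_f_1} are uniform on a real neighborhood of $R_{f_2}$ intersected with $[1,R_{f_2}]$, so that the estimates survive being integrated over an interval of vanishing length; this uniformity follows from the analytic continuation of $h$ provided by the smooth implicit-function schema (case 2) and from the simple nature of the zero of $h(x)-x$ at $R^*$ (cases 1 and 3). Once this is in hand, the rest of the proof is purely bookkeeping of elementary integrals.
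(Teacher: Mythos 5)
Your proposal is correct and takes essentially the same approach as the paper: integrate the local expansions of $f_1$ from Lemma \ref{lemma_f_1} term by term, subtracting the pole in case 1 and using the direct local integrability of the (inverse) square-root singularities in cases 2 and 3. The only cosmetic difference is that the paper outsources the justification of termwise integration to Flajolet and Sedgewick's Theorem VI.9 and Remark VI.28, whereas you argue the uniformity of the remainders by hand; both routes land on the same computation.
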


\begin{proof}
According to \cite{Flajolet08}, section VI.10.1, p.\ 418, integration of a function that is amenable to singularity analysis preserves that property. Basically, the asymptotic expansion of the integration equals the integration of the asymptotic expansion of the integrand. According to \cite{Flajolet08}, Theorem VI.9, p.\ 420, $R_{f_2}=R_{f_1}$, so we analyse the three cases of Lemma \ref{lemma_f_1} separately.
\begin{enumerate}
\item $R^*$ exists and is smaller than $R_h$: we can use \cite{Flajolet08}, Theorem VI.9, p.\ 420 and remark VI.28, p.\ 422 to find
\begin{align}
f_2(z)={} & -\dfrac{\lambda}{\nu}\int_{z}^1\left(\dfrac{c^{(1)}_{f_1}}{R_{f_2}-x}+\left(f_1(x)-\dfrac{c^{(1)}_{f_1}}{R_{f_2}-x}\right)\right)dx \nonumber \\
={} & -\dfrac{\lambda}{\nu}c^{(1)}_{f_1}\ln(R_{f_2}-z)+\dfrac{\lambda}{\nu}c^{(1)}_{f_1}\ln(R_{f_2}-1)-\dfrac{\lambda}{\nu}\int_{R_{f_2}}^1\left(f_1(x)-\dfrac{c^{(1)}_{f_1}}{R_{f_2}-x}\right)dx \nonumber \\
& +\dfrac{\lambda}{\nu}\int_z^{R_{f_2}}\left(f_1(x)-\dfrac{c^{(1)}_{f_1}}{R_{f_2}-x)}\right)dx, \nonumber
\end{align}
and the last term is $O((R_{f_1}-z))$ since the integrand is $O(1)$, see Lemma \ref{lemma_f_1}.
\item $R^*$ does not exist: we can use \cite{Flajolet08}, Theorem VI.9, p.\ 420 to find
\begin{align}
f_2(z)={} & f_2(R_{f_2})-\dfrac{\lambda}{\nu}\int_{z}^{R_{f_2}}f_1(x)dx \nonumber \\
={} & f_2(R_{f_2})-\dfrac{\lambda}{\nu}\int_{z}^{R_{f_2}}\left(f_1(R_{f_2})-c^{(2)}_{f_1}(R_{f_2}-x)^{1/2}+O(R_{f_2}-x)\right)dx \nonumber \\
={} & f_2(R_{f_2})-\dfrac{\lambda}{\nu}f_1(R_{f_2})(R_{f_2}-z)+\dfrac{2\lambda}{3\nu}c^{(2)}_{f_1}(R_{f_2}-z)^{3/2}+O((R_{f_2}-z)^2) \nonumber
\end{align}
\item $R^*$ exists and is equal to $R_h$: we use \cite{Flajolet08}, Theorem VI.9, p.\ 420 to find
\begin{align}
f_2(z)={} & f_2(R_{f_2})-\dfrac{\lambda}{\nu}\int_{z}^{R_{f_2}}f_1(x)dx \nonumber \\
={} & f_2(R_{f_2})-\dfrac{\lambda}{\nu}\int_{z}^{R_{f_2}}\left(\dfrac{c^{(3)}_{f_1}}{(R_{f_2}-z)^{1/2}}+O(1)\right)dx \nonumber \\
={} & f_2(R_{f_2})-\dfrac{2\lambda}{\nu}c^{(3)}_{f_1}(R_{f_2}-z)^{1/2}+O(R_f-z) \nonumber
\end{align}
\end{enumerate}

\end{proof}

Finally, we are ready to do singularity analysis of $Q(z)$ and to asymptotically invert it.

\begin{theorem}
The dominant singularity $R_{Q}$ of $Q(z)$ equals $R_{f_1}$. The asymptotic behavior of $Q(z)$ near that dominant singularity is one of three cases:
\begin{align}
Q(z)\sim{} & \left\{\begin{array}{cc}
c_Q^{(1)}(R_Q-z)^{-\dfrac{\lambda}{\nu}c^{(1)}_{f_1}} & \mbox{ if } R_Q=R^*<R_h \\
Q(R_Q)-(1-\rho)\exp\left(f_2(R_Q)\right)\dfrac{\lambda}{\nu}f_1(R_Q)(R_Q-z) &  \\
+c_Q^{(2)}(R_Q-z)^{3/2} & \mbox{ if } R_Q=R_h \mbox{ and } R^* \mbox{ does not exist} \\
Q(R_Q)-c_Q^{(3)}(R_Q-z)^{1/2} & \mbox{ if } R_Q=R^*=R_h \\
\end{array}\right., \label{asymptotics_Q}
\end{align}
with 
\begin{align}
c_Q^{(1)}={} & (1-\rho)(R_Q-1)^{\dfrac{\lambda}{\nu}c^{(1)}_{f_1}}\exp\left(-\dfrac{\lambda}{\nu}\int_{R_Q}^1\left(f_1(x)-\dfrac{c^{(1)}_{f_1}}{R_Q-x}\right)dx\right), \nonumber \\
c_Q^{(2)}={} & (1-\rho)\exp\left(f_2(R_Q)\right)\dfrac{2\lambda}{3\nu}c^{(2)}_{f_1}, \nonumber \\
c_Q^{(3)}={} & (1-\rho)\exp\left(f_2(R_Q)\right)\dfrac{2\lambda}{\nu}c^{(3)}_{f_1}, \nonumber
\end{align}
and the constants $c_{f_1}^{(i)}$ as in Lemma \ref{lemma_f_1}.
\label{theorem_asymptotics_Q}
\end{theorem}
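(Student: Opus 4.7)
The plan is to combine the factorization $Q(z) = (1-\rho)\exp(f_2(z))$ with the three-case asymptotic expansion of $f_2(z)$ from Lemma \ref{lemma_f_2} and then Taylor-expand the exponential around the appropriate base point. Since the exponential is entire, any singular behavior of $Q(z)$ must be inherited from $f_2(z)$, so the identity $R_Q = R_{f_2} = R_{f_1}$ follows immediately.

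For the first case ($R_Q = R^* < R_h$) the singular part of $f_2(z)$ is the logarithm $-\tfrac{\lambda}{\nu}c^{(1)}_{f_1}\ln(R_Q-z)$, so I would factor
\begin{align}
\exp(f_2(z)) = (R_Q-z)^{-\frac{\lambda}{\nu}c^{(1)}_{f_1}}\cdot\exp\!\Bigl(f_2(z)+\tfrac{\lambda}{\nu}c^{(1)}_{f_1}\ln(R_Q-z)\Bigr), \nonumber
\end{align}
noting that the argument of the remaining $\exp$ is an analytic-at-$R_Q$ (in fact $O(1)$) perturbation of the constant $\tfrac{\lambda}{\nu}c^{(1)}_{f_1}\ln(R_Q-1) -\tfrac{\lambda}{\nu}\int_{R_Q}^{1}\bigl(f_1(x)-c^{(1)}_{f_1}/(R_Q-x)\bigr)\,dx$. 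Multiplying by $(1-\rho)$ produces the constant $c_Q^{(1)}$ as stated.

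For the second and third cases, $f_2(z)$ has a finite limit $f_2(R_Q)$ at the singularity, so I would write $f_2(z)=f_2(R_Q)+\delta(z)$ with $\delta(z)\to 0$, and expand $\exp(f_2(z)) = \exp(f_2(R_Q))\bigl(1+\delta(z)+O(\delta(z)^2)\bigr)$. In case 2, $\delta(z) = -\tfrac{\lambda}{\nu}f_1(R_Q)(R_Q-z)+\tfrac{2\lambda}{3\nu}c^{(2)}_{f_1}(R_Q-z)^{3/2}+O((R_Q-z)^2)$, and since $\delta(z)^2 = O((R_Q-z)^2)$ it gets absorbed into the error term, yielding the stated three-term expansion with $c_Q^{(2)} = (1-\rho)\exp(f_2(R_Q))\cdot\tfrac{2\lambda}{3\nu}c^{(2)}_{f_1}$. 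In case 3, $\delta(z) = -\tfrac{2\lambda}{\nu}c^{(3)}_{f_1}(R_Q-z)^{1/2}+O(R_Q-z)$, so $\delta(z)^2 = O(R_Q-z)$ is again absorbed, giving the square-root expansion with $c_Q^{(3)} = (1-\rho)\exp(f_2(R_Q))\cdot\tfrac{2\lambda}{\nu}c^{(3)}_{f_1}$.

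I do not expect any real obstacle here: the main bookkeeping task is, in case 1, to verify that the non-logarithmic part of $f_2(z)$ is genuinely analytic (or at least $O(1)$ with a well-defined limit) at $R_Q$ so that exponentiating it produces a finite multiplicative constant rather than further singular behaviour, and, in cases 2 and 3, to confirm that the quadratic remainder $\delta(z)^2$ is dominated by the error terms already present so that the leading singular correction comes purely from the linear term in the expansion of $\exp$. Both are straightforward given the error estimates already stated in Lemma \ref{lemma_f_2}, so the whole argument reduces to three short computations.
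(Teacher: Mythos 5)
Your proposal is correct and follows essentially the same route as the paper: factor $Q(z)=(1-\rho)\exp(f_2(z))$, split off the singular part of $f_2$, and Taylor-expand the exponential, observing that $\delta(z)^2$ is absorbed in the error term in cases 2 and 3. The only cosmetic difference is that the paper frames the three cases through Flajolet--Sedgewick's composition taxonomy (case 1 is the critical exp-log schema, cases 2 and 3 are subcritical compositions) before carrying out exactly the expansions you describe, which serves to justify that the singular expansions remain valid in a $\Delta$-domain so that singularity analysis can later be applied.
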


\begin{proof}
The function $Q(z)$ is the composition of $f_3$ and $f_2$ with $f_2$ given in Lemma \ref{lemma_f_2} and 
\begin{align}
f_3(z):={} & (1-\rho)\exp\left(z\right). \nonumber
\end{align}
The asymptotic behavior of compositions is treated in \cite{Flajolet08}, section VI.9, p.\ 411. In general, three different cases arise depending on the value of $f_2(R_{f_2})$ in comparison with $R_{f_3}$. Since, in the current case, $f_3(z)$ is analytic in the whole complex plane, $R_{f_3}=\infty$. Therefore, two of the three possible cases can occur, depending on whether  $f_2(R_{f_2})$ is finite. We treat the three cases for the asymptotic behavior of $f_2(z)$ established in Lemma \ref{lemma_f_2} separately.

\begin{enumerate}
\item $R_{f_2}=R^*<R_h$: $R_{f_2}$ is a logarithmic singularity of $f_2(z)$ and thus $f_2(R_{f_2})=\infty$. This case is therefore of the critical type ($R_{f_3}=f_2(R_{f_2})$). According to \cite{Flajolet08}, section VI.9, p.\ 411, the singularity type is then a mix of the types of both functions $f_2$ and $f_3$. Function $f_2(z)$ is of $\left(-\dfrac{\lambda}{\nu}c^{(1)}_{f_1},0\right)$-logarithmic type (\cite{Flajolet08}, definition VII.1, p.\ 446), while $f_3(z)$ is of exponential type. We therefore have an exp-log schema (\cite{Flajolet08}, section VII.2, p.\ 446), $R_Q=R_{f_2}$ and
\begin{align}
Q(z)={} & (1-\rho)(R_Q-z)^{-\dfrac{\lambda}{\nu}c^{(1)}_{f_1}}(R_Q-1)^{\dfrac{\lambda}{\nu}c^{(1)}_{f_1}}\exp\left(-\dfrac{\lambda}{\nu}\int_{R_Q}^1\left(f_1(x)-\dfrac{c^{(1)}_{f_1}}{R_Q-x)}\right)dx\right)\exp\left(O(R_Q-z)\right) \nonumber \\
={} & (1-\rho)(R_Q-1)^{\dfrac{\lambda}{\nu}c^{(1)}_{f_1}}\exp\left(-\dfrac{\lambda}{\nu}\int_{R_Q}^1\left(f_1(x)-\dfrac{c^{(1)}_{f_1}}{R_Q-x)}\right)dx\right)(R_Q-z)^{-\dfrac{\lambda}{\nu}c^{(1)}_{f_1}}\cdot(1+O(R_Q-z)) \nonumber \\
\sim{} & (1-\rho)(R_Q-1)^{\dfrac{\lambda}{\nu}c^{(1)}_{f_1}}\exp\left(-\dfrac{\lambda}{\nu}\int_{R_Q}^1\left(f_1(x)-\dfrac{c^{(1)}_{f_1}}{R_Q-x)}\right)dx\right)(R_Q-z)^{-\dfrac{\lambda}{\nu}c^{(1)}_{f_1}}, \nonumber
\end{align}
as $z\rightarrow R_Q$. 
\item $R_{f_2}=R_h$ and $R^*$ does not exist: $f_2(R_{f_2})<\infty$ and, therefore, this case is of the subcritical type ($f_2(R_{f_2})<R_{f_3}$). In this case, the singularity (type) is that of the internal function $f_2$ and the expansion is obtained by combining the regular expansion of $f_3$ with the singular expansion of $f_2$ at $R_{f_2}$, see \cite{Flajolet08}, section VI.9, p.\ 411. We obtain
\begin{align}
Q(z)={} & (1-\rho)\exp\left(f_2(R_Q)-\dfrac{\lambda}{\nu}f_1(R_Q)(R_Q-z)+\dfrac{2\lambda}{3\nu}c^{(2)}_{f_1}(R_Q-z)^{3/2}+O((R_Q-z)^2)\right) \nonumber \\
\sim{} & Q(R_Q)-(1-\rho)\exp\left(f_2(R_Q)\right)\dfrac{\lambda}{\nu}f_1(R_Q)(R_Q-z)+(1-\rho)\exp\left(f_2(R_Q)\right)\dfrac{2\lambda}{3\nu}c^{(2)}_{f_1}(R_Q-z)^{3/2}, \nonumber
\end{align}
as $z\rightarrow R_Q$. 
\item $R_Q=R^*=R_h$: $f_2(R_{f_2})<\infty$ and this case is also of the subcritical type. We have $R_Q=R_{f_2}$ and
\begin{align}
Q(z)={} & (1-\rho)\exp\left(f_2(R_Q)\right)\exp\left(-\dfrac{2\lambda}{\nu}c^{(3)}_{f_1}(R_Q-z)^{1/2}+O(R_Q-z)\right) \nonumber \\
\sim{} & Q(R_Q)-(1-\rho)\exp\left(f_2(R_Q)\right)\dfrac{2\lambda}{\nu}c^{(3)}_{f_1}(R_Q-z)^{1/2}, \nonumber
\end{align}
as $z\rightarrow R_Q$.
\end{enumerate}
\end{proof}

\begin{corollary}
Define $q(n)$ as the probability that $n$ customers are in orbit and the server is free. Then
\begin{align}
q(n)\sim{} & \left\{\begin{array}{cc}
\dfrac{c_Q^{(1)}}{\Gamma\left(\dfrac{\lambda}{\nu}c^{(1)}_{f_1}\right)}n^{\dfrac{\lambda}{\nu}c^{(1)}_{f_1}-1}R_Q^{-n-\dfrac{\lambda}{\nu}c^{(1)}_{f_1}} & \mbox{ if } R_Q=R^*<R_h \\
\dfrac{3c_Q^{(2)}}{4\sqrt{\pi}}n^{-5/2}R_Q^{-n+3/2} & \mbox{ if } R_Q=R_h \mbox{ and } R^* \mbox{ does not exist} \\
\dfrac{c_Q^{(3)}}{2\sqrt{\pi}}n^{-3/2}R_Q^{-n+1/2} & \mbox{ if } R_Q=R^*=R_h \\
\end{array}\right.. \nonumber
\end{align}
\label{corollary_asymptotics_q}
\end{corollary}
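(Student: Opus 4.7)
The plan is to apply the standard transfer formula \eqref{asymptotic_inversion} directly to each of the three singular expansions of $Q(z)$ given in Theorem \ref{theorem_asymptotics_Q}. Since Theorem \ref{theorem_asymptotics_Q} has already delivered the suitable form $c_Q(R_Q-z)^{-\alpha_Q}$ in each regime (up to lower-order analytic terms), the corollary reduces to a routine extraction of coefficients, with the only care needed being (i) the treatment of the polynomial pieces in the subcritical cases and (ii) the evaluation of the Gamma factors at negative half-integers.

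In the first case, $R_Q=R^*<R_h$, the dominant singular behavior is already of the form $c_Q^{(1)}(R_Q-z)^{-\lambda c_{f_1}^{(1)}/\nu}$, so applying \eqref{asymptotic_inversion} with $\alpha_Q=\lambda c_{f_1}^{(1)}/\nu$ yields the first line of the corollary immediately. In the second and third cases, the expansion of $Q(z)$ near $R_Q$ contains analytic (constant and possibly linear) pieces in $(R_Q-z)$ in addition to the genuinely singular term of exponent $3/2$ or $1/2$. These polynomial parts correspond to $\alpha\in\{0,-1\}$, and since $1/\Gamma(\alpha)=0$ at non-positive integers, they contribute nothing to the coefficient asymptotics and can simply be dropped. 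Thus only the $(R_Q-z)^{3/2}$ term contributes in case 2, and only the $(R_Q-z)^{1/2}$ term contributes in case 3.

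It then remains to apply \eqref{asymptotic_inversion} with $\alpha_Q=-3/2$ and $\alpha_Q=-1/2$, respectively. Using $\Gamma(-3/2)=\tfrac{4\sqrt{\pi}}{3}$ and $\Gamma(-1/2)=-2\sqrt{\pi}$, together with the sign carried by the $-c_Q^{(3)}(R_Q-z)^{1/2}$ term (whose sign cancels that of $\Gamma(-1/2)$), delivers the factors $\tfrac{3c_Q^{(2)}}{4\sqrt{\pi}}$ and $\tfrac{c_Q^{(3)}}{2\sqrt{\pi}}$ in the second and third lines of the corollary, and the exponents $R_Q^{-n+3/2}$ and $R_Q^{-n+1/2}$ follow from the $R_f^{-n-\alpha_f}$ factor in \eqref{asymptotic_inversion}.

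There is no real obstacle in this corollary; the technical work has all been done in Theorem \ref{theorem_asymptotics_Q}. The only points that deserve a brief justification in the write-up are the fact that the analytic remainder terms (for instance the $O((R_Q-z)^2)$ remainder in case 2 and the $Q(R_Q)$ and linear-in-$(R_Q-z)$ contributions) do not affect the leading asymptotics—since each of them, after applying the transfer theorem, produces either zero or a term dominated by $n^{-5/2}R_Q^{-n}$ or $n^{-3/2}R_Q^{-n}$—and the numerical evaluation of $\Gamma$ at $-3/2$ and $-1/2$ to arrive at the stated constants.
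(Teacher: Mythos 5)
Your proposal is correct and follows essentially the same route as the paper: apply the transfer formula \eqref{asymptotic_inversion} to the singular expansions in Theorem \ref{theorem_asymptotics_Q}, discard the analytic terms (constants and integer powers of $R_Q-z$, which are annihilated because $1/\Gamma$ vanishes at non-positive integers), and evaluate $\Gamma(-3/2)=4\sqrt{\pi}/3$ and $\Gamma(-1/2)=-2\sqrt{\pi}$ via $\Gamma(x+1)=x\Gamma(x)$ and $\Gamma(1/2)=\sqrt{\pi}$. The paper's own proof is a one-line appeal to exactly these ingredients; your write-up supplies the same reasoning with the intermediate steps made explicit, including the sign cancellation in the third case.
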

\begin{proof}
We have that $q(n)=[z^n]Q(z)$. The corollary follows directly from the asymptotic behavior of function $Q(z)$ derived in Theorem \ref{theorem_asymptotics_Q}, the general inversion formula \eqref{asymptotic_inversion} and two properties of the Gamma-function, namely $\Gamma(n+1)=n\Gamma(n)$ and $\Gamma(1/2)=\sqrt{\pi}$.
\end{proof}

\subsection{Asymptotic Inversion of $R(z)$}

The asymptotic behavior of $R(z)$ and its asymptotic inversion are closely related to those of $Q(z)$, since $Q(z)$ is a factor of $R(z)$, cf.\ expressions \eqref{Q} and \eqref{R}. We have following theorem and corollary.

\begin{theorem}
The dominant singularity $R_R$ of $R(z)$ is equal to $R_Q$. The function $R(z)$ behaves asymptotically as
\begin{align}
R(z)\sim{} & \left\{\begin{array}{cc}
\dfrac{\lambda}{\lambda_2}c_Q^{(1)}c^{(1)}_{f_1}(R_R-z)^{-1-\dfrac{\lambda}{\nu}c^{(1)}_{f_1}} & \mbox{ if } R_R=R^*<R_h \\
R(R_R)-\dfrac{\lambda}{\lambda_2}c^{(2)}_{f_1}Q(R_R)(R_R-z)^{1/2} & \mbox{ if } R_R=R_h \mbox{ and } R^* \mbox{ does not exist} \\
\dfrac{\lambda}{\lambda_2}\dfrac{c^{(3)}_{f_1}Q(R_R)}{(R_R-z)^{1/2}} & \mbox{ if } R_R=R^*=R_h \\
\end{array}\right., \label{asymptotics_R}
\end{align} 
as $z\rightarrow R_R$.
\label{theorem_asymptotics_R}
\end{theorem}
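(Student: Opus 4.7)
The plan is to exploit the factorization
\begin{align*}
R(z) = \frac{\lambda}{\lambda_2}\,Q(z)\,f_1(z),
\end{align*}
which is immediate from comparing \eqref{Q}, \eqref{R}, and \eqref{f1}. Since both $Q(z)$ and $f_1(z)$ are analytic in $|z|<R_{f_1}$ and singular at $z=R_{f_1}$, the product is analytic in the same disk with (at most) a singularity at $R_{f_1}$; hence $R_R = R_Q = R_{f_1}$. The singular expansion in each of the three regimes is then obtained by multiplying the expansion of $Q(z)$ from Theorem~\ref{theorem_asymptotics_Q} by the expansion of $f_1(z)$ from Lemma~\ref{lemma_f_1}, and retaining the dominant singular contribution.

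In regime~1 ($R_R=R^*<R_h$), both $Q$ and $f_1$ have pure power-type singular behavior; multiplying $c_Q^{(1)}(R_R-z)^{-\lambda c^{(1)}_{f_1}/\nu}$ by $c^{(1)}_{f_1}(R_R-z)^{-1}$ and tracking the $O$-remainders yields the claimed $(R_R-z)^{-1-\lambda c^{(1)}_{f_1}/\nu}$ with prefactor $\frac{\lambda}{\lambda_2}c_Q^{(1)}c^{(1)}_{f_1}$. In regime~2 ($R_R=R_h$, $R^*$ absent), $Q(z)$ is bounded at $R_R$ while $f_1(z) = f_1(R_R) - c^{(2)}_{f_1}(R_R-z)^{1/2}+O(R_R-z)$; the constant part of the product equals $R(R_R)=\frac{\lambda}{\lambda_2}Q(R_R)f_1(R_R)$, and the first non-analytic correction comes from pairing $Q(R_R)$ with the $(R_R-z)^{1/2}$ term in $f_1$, producing $-\frac{\lambda}{\lambda_2}c^{(2)}_{f_1}Q(R_R)(R_R-z)^{1/2}$, while all other cross-terms (including the $(R_R-z)^{3/2}$ correction in $Q$) are of lower order. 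In regime~3 ($R_R=R^*=R_h$), $f_1(z)$ diverges like $c^{(3)}_{f_1}(R_R-z)^{-1/2}$ while $Q(z)\to Q(R_R)$, so the leading singular contribution is $\frac{\lambda}{\lambda_2}Q(R_R)c^{(3)}_{f_1}(R_R-z)^{-1/2}$; the $(R_R-z)^{1/2}$ correction in $Q$ multiplied by this divergence yields only a bounded contribution.

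The argument amounts to careful bookkeeping of the $O(\cdot)$ remainders from the two already-established expansions, so the main (rather minor) obstacle is simply to verify that the coefficient of the alleged dominant term is nonzero and that no cross-term disrupts the leading order. This follows from the explicit non-vanishing of the constants $c^{(i)}_{f_1}$ given in Lemma~\ref{lemma_f_1} and from $Q(R_R)>0$, which holds because $Q(z)$ is a PGF with nonnegative coefficients evaluated at a positive real interior point of its disk of convergence.
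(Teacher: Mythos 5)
Your proof is correct and takes essentially the same approach as the paper: write $R(z)=\frac{\lambda}{\lambda_2}f_1(z)Q(z)$, note that $f_1$ and $Q$ share the same dominant singularity, and multiply the two singular expansions case by case, keeping the dominant term. The paper states this more tersely; your explicit bookkeeping of cross-terms and the non-vanishing of the prefactors fills in what the paper leaves implicit.
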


\begin{proof}
The function $R(z)$ can be written as
\begin{align}
R(z)={} & \dfrac{\lambda}{\lambda_2}f_1(z)Q(z). \nonumber
\end{align}
Both $f_1(z)$ and $Q(z)$ have the same dominant singularity, namely $R_h$ or $R^*$, see Lemma \ref{lemma_f_1} and Theorem \ref{theorem_asymptotics_Q}. Multiplying both asymptotic expressions and keeping the dominant terms lead to the theorem.
\end{proof}

\begin{corollary}
Define $q(n)$ as the probability that $n$ customers are in orbit and the server is busy. Then
\begin{align}
r(n)\sim{} & \left\{\begin{array}{cc}
\dfrac{\lambda}{\lambda_2}\dfrac{c_Q^{(1)}c^{(1)}_{f_1}}{\Gamma\left(1+\dfrac{\lambda}{\nu}c^{(1)}_{f_1}\right)}n^{\dfrac{\lambda}{\nu}c^{(1)}_{f_1}}R_R^{-n-1-\dfrac{\lambda}{\nu}c^{(1)}_{f_1}} & \mbox{ if } R_R=R^*<R_h \\
\dfrac{\lambda}{\lambda_2}\dfrac{c^{(2)}_{f_1}Q(R_R)}{2\sqrt{\pi}}n^{-3/2}R_R^{-n+1/2} & \mbox{ if } R_R=R_h \mbox{ and } R^* \mbox{ does not exist} \\
\dfrac{\lambda}{\lambda_2}\dfrac{c^{(3)}_{f_1}Q(R_R)}{\sqrt{\pi}}n^{-1/2}R_R^{-n-1/2} & \mbox{ if } R_R=R^*=R_h \\
\end{array}\right.. \nonumber
\end{align}
\label{corollary_asymptotics_r}
\end{corollary}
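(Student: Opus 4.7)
The plan is to mimic exactly the proof of Corollary~\ref{corollary_asymptotics_q}: use that $r(n)=[z^n]R(z)$, plug the three singular expansions of $R(z)$ established in Theorem~\ref{theorem_asymptotics_R} into the transfer formula \eqref{asymptotic_inversion}, and simplify the resulting Gamma factors using $\Gamma(s+1)=s\Gamma(s)$ and $\Gamma(1/2)=\sqrt{\pi}$.

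Concretely, I would treat each case separately. In the first case ($R_R=R^*<R_h$), the expansion of $R(z)$ is a pure power $c(R_R-z)^{-\alpha}$ with exponent $\alpha=1+\frac{\lambda}{\nu}c^{(1)}_{f_1}$ and coefficient $c=\frac{\lambda}{\lambda_2}c_Q^{(1)}c^{(1)}_{f_1}$, so \eqref{asymptotic_inversion} applies verbatim and gives the first line. In the third case ($R_R=R^*=R_h$), one applies \eqref{asymptotic_inversion} with $\alpha=1/2$, $c=\frac{\lambda}{\lambda_2}c^{(3)}_{f_1}Q(R_R)$, and uses $\Gamma(1/2)=\sqrt{\pi}$ to recover the third line. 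The only mildly delicate case is the second one, $R_R=R_h$ with $R^*$ nonexistent: there the expansion reads $R(R_R)-\frac{\lambda}{\lambda_2}c^{(2)}_{f_1}Q(R_R)(R_R-z)^{1/2}$. The constant $R(R_R)$ is entire in $z$, so it contributes $0$ to $[z^n]R(z)$ for $n\geq 1$ and can be discarded; the remaining term has $\alpha=-1/2$ and coefficient $-\frac{\lambda}{\lambda_2}c^{(2)}_{f_1}Q(R_R)$, so \eqref{asymptotic_inversion} yields a factor $n^{-3/2}/\Gamma(-1/2)$, and the identity $\Gamma(-1/2)=-2\sqrt{\pi}$ cancels the minus sign, producing the second line of the stated asymptotics.

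There is essentially no obstacle: the heavy lifting has already been done in Theorem~\ref{theorem_asymptotics_R}, which identifies $R_R=R_Q$ and provides the singular expansion with the correct error control so that \eqref{asymptotic_inversion} is directly applicable. The only point to watch is the sign arising from $\Gamma(-1/2)<0$ in the second case, and the bookkeeping check that the analytic constant $R(R_R)$ makes no asymptotic contribution. Thus the corollary follows as a direct corollary of Theorem~\ref{theorem_asymptotics_R}, the transfer formula \eqref{asymptotic_inversion}, and the two elementary Gamma-function identities.
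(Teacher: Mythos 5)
Your proof is correct and follows the same route as the paper: identify $r(n)=[z^n]R(z)$, apply the transfer formula \eqref{asymptotic_inversion} to the three singular expansions of $R(z)$ from Theorem~\ref{theorem_asymptotics_R}, and clean up with the Gamma-function identities. Your extra remarks on the second case---that the additive constant $R(R_R)$ contributes nothing to the coefficients for $n\geq1$, and that the minus sign is absorbed by $\Gamma(-1/2)=-2\sqrt{\pi}$---are exactly the bookkeeping the paper leaves implicit, so the argument is complete.
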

\begin{proof}
We have that $r(n)=[z^n]R(z)$. The corollary follows directly from the asymptotic behavior of function $R(z)$ derived in Theorem \ref{theorem_asymptotics_R}, the general inversion formula \eqref{asymptotic_inversion} and two properties of the Gamma-function, namely $\Gamma(n+1)=n\Gamma(n)$ and $\Gamma(1/2)=\sqrt{\pi}$.
\end{proof}

\subsection{Asymptotic Inversion of $P_2(z)$}

Finally, we calculate the asymptotics of the distribution $p_2(n)$ of the number of type-II customers in orbit. 
\begin{theorem}
The dominant singularity $R_{P_2}$ of $P_2(z)$ is equal to $R_R$. Furthermore, 
\begin{align}
P_2(z)\sim & \left\{\begin{array}{cc}
\dfrac{\lambda}{\lambda_2}c_Q^{(1)}c^{(1)}_{f_1}(R_{P_2}-z)^{-1-\dfrac{\lambda}{\nu}c^{(1)}_{f_1}} & \mbox{ if } R_{P_2}=R^*<R_h \\
P_2(R_{P_2})-\dfrac{\lambda}{\lambda_2}c^{(2)}_{f_1}Q(R_{P_2})(R_{P_2}-z)^{1/2} & \mbox{ if } R_{P_2}=R_h \mbox{ and } R^* \mbox{ does not exist} \\
\dfrac{\lambda}{\lambda_2}\dfrac{c^{(3)}_{f_1}Q(R_{P_2})}{(R_{P_2}-z)^{1/2}} & \mbox{ if } R_{P_2}=R^*=R_h \\
\end{array}\right., \nonumber
\end{align}
as $z\rightarrow R_{P_2}$.
\label{theorem_asymptotics_P2}
\end{theorem}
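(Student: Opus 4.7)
The plan is to exploit the identity $P_2(z)=Q(z)+R(z)$ together with Theorems \ref{theorem_asymptotics_Q} and \ref{theorem_asymptotics_R}, which already give the singular expansions of $Q$ and $R$. Since both $Q$ and $R$ share the same dominant singularity (namely $R_Q=R_R$, equal to $R^*$ or $R_h$ depending on the regime), adding them does not create any new singularities, and we will have $R_{P_2}=R_R$. The only task is to identify, in each of the three regimes, which of the two contributions dominates as $z\to R_{P_2}$ and keep the leading singular term.

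First, I would fix the regime by comparing the singular exponents of $Q$ and $R$ in each of the three cases from Lemma \ref{lemma_f_1}:
\begin{enumerate}
\item In the regime $R^*<R_h$, $Q(z)$ blows up like $(R_{P_2}-z)^{-\lambda c_{f_1}^{(1)}/\nu}$ while $R(z)$ blows up like $(R_{P_2}-z)^{-1-\lambda c_{f_1}^{(1)}/\nu}$. The latter exponent is smaller, so $R(z)$ dominates and $P_2(z)\sim R(z)$, which gives the first line of the claimed expansion.
\item In the regime $R_{P_2}=R_h$ with $R^*$ nonexistent, both $Q$ and $R$ are finite at $R_{P_2}$. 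Writing $Q(z)=Q(R_{P_2})+O(R_{P_2}-z)$ and $R(z)=R(R_{P_2})-(\lambda/\lambda_2)c_{f_1}^{(2)}Q(R_{P_2})(R_{P_2}-z)^{1/2}+\cdots$, the constant parts combine into $P_2(R_{P_2})$ and the $(R_{P_2}-z)^{1/2}$ term from $R(z)$ dominates over the $O(R_{P_2}-z)$ correction coming from $Q(z)$, yielding the second line.
\item In the regime $R_{P_2}=R^*=R_h$, $Q(z)$ has a $(R_{P_2}-z)^{1/2}$ singularity whereas $R(z)$ has a $(R_{P_2}-z)^{-1/2}$ singularity. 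The latter dominates and $P_2(z)\sim R(z)$, giving the third line.
\end{enumerate}

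In each case, the identification of $R_{P_2}$ follows because $R_Q=R_R$ and the sum of two functions with a common dominant singularity has that same singularity (the leading singular part cannot cancel because in cases 1 and 3 only $R$ carries a blow-up, and in case 2 the $(R_{P_2}-z)^{1/2}$ coefficient is nonzero, as $c_{f_1}^{(2)}$ and $Q(R_{P_2})$ are both positive). So the proof amounts to little more than term-by-term addition of the expansions in \eqref{asymptotics_Q} and \eqref{asymptotics_R} followed by discarding subdominant terms.

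There is no genuine obstacle here beyond bookkeeping; the substantive work was already carried out in Theorems \ref{theorem_asymptotics_Q} and \ref{theorem_asymptotics_R}. The only mild subtlety is ensuring, in case 2, that the two leading subdominant terms of $Q$ and $R$ really do combine so that the $(R_{P_2}-z)^{1/2}$ contribution from $R$ dominates the $O(R_{P_2}-z)$ contribution from $Q$; this is immediate since $1/2<1$. One should also verify that $c_{f_1}^{(2)}Q(R_{P_2})\neq 0$ so that the expansion is genuinely of order $(R_{P_2}-z)^{1/2}$, which follows from $Q(R_{P_2})>0$ (as $Q$ is a partial PGF with non-negative coefficients convergent at a point $>1$) and $c_{f_1}^{(2)}>0$ from its explicit form in Lemma \ref{lemma_f_1}.
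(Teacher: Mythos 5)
Your proposal is correct and follows exactly the paper's route: write $P_2(z)=Q(z)+R(z)$, note that $Q$ and $R$ share the same dominant singularity so $R_{P_2}=R_Q=R_R$, then add the singular expansions from Theorems \ref{theorem_asymptotics_Q} and \ref{theorem_asymptotics_R} and keep the dominant singular contribution (from $R(z)$ in all three cases, plus the constant $P_2(R_{P_2})$ in case 2). Your extra check that $c_{f_1}^{(2)}Q(R_{P_2})\neq 0$ is a small but welcome addition of rigor that the paper leaves implicit.
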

\begin{proof}
From $P_2(z)=Q(z)+R(z)$, see \eqref{P2}, and $R_Q=R_R$, it follows that $R_{P_2}=R_Q=R_R$. The asymptotic behavior of $P_2(z)$ is the asymptotic behavior of the sum of formulas \eqref{asymptotics_Q} and \eqref{asymptotics_R}. However, the asymptotics of $Q(z)$ are negligible to the asymptotics of $R(z)$ except for the constant part in the second case. The theorem follows directly.
\end{proof}
\begin{corollary}
The asymptotics of the distribution $p_2(n)$ of the number of type-II customers in orbit is given by
\begin{align}
p_2(n)\sim{} & \left\{\begin{array}{cc}
\dfrac{\lambda}{\lambda_2}\dfrac{c_Q^{(1)}c^{(1)}_{f_1}}{\Gamma\left(1+\dfrac{\lambda}{\nu}c^{(1)}_{f_1}\right)}n^{\dfrac{\lambda}{\nu}c^{(1)}_{f_1}}R_{P_2}^{-n-1-\dfrac{\lambda}{\nu}c^{(1)}_{f_1}} & \mbox{ if } R_{P_2}=R^*<R_h \\
\dfrac{\lambda}{\lambda_2}\dfrac{c^{(2)}_{f_1}Q(R_{P_2})}{2\sqrt{\pi}}n^{-3/2}R_{P_2}^{-n+1/2} & \mbox{ if } R_{P_2}=R_h \mbox{ and } R^* \mbox{ does not exist} \\
\dfrac{\lambda}{\lambda_2}\dfrac{c^{(3)}_{f_1}Q(R_{P_2})}{\sqrt{\pi}}n^{-1/2}R_{P_2}^{-n-1/2} & \mbox{ if } R_{P_2}=R^*=R_h \\
\end{array}\right.. \label{asymptotics_p2}
\end{align}
\label{corollary_asymptotics_p2}
\end{corollary}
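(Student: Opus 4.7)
The plan is to mirror the proofs of Corollaries \ref{corollary_asymptotics_q} and \ref{corollary_asymptotics_r}: start from the identity $p_2(n) = [z^n] P_2(z)$, substitute the three-case singular expansion of $P_2(z)$ at $R_{P_2}$ given by Theorem \ref{theorem_asymptotics_P2}, and then apply the singularity transfer \eqref{asymptotic_inversion} case-by-case, reading off the exponent $\alpha_f$ and prefactor $c_f$ directly from the expansion. The only arithmetic is simplification of $\Gamma$ at half-integer and integer-shifted arguments using $\Gamma(n+1)=n\Gamma(n)$ and $\Gamma(1/2)=\sqrt{\pi}$.

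For Case 1, where $R_{P_2}=R^*<R_h$, the expansion is a pure power singularity with $\alpha_f=1+\tfrac{\lambda}{\nu}c^{(1)}_{f_1}$ and $c_f=\tfrac{\lambda}{\lambda_2}c^{(1)}_Q c^{(1)}_{f_1}$; substituting into \eqref{asymptotic_inversion} yields the stated formula without further manipulation. For Case 3, where $R_{P_2}=R^*=R_h$, one takes $\alpha_f=1/2$ and $c_f=\tfrac{\lambda}{\lambda_2}c^{(3)}_{f_1}Q(R_{P_2})$, and $\Gamma(1/2)=\sqrt{\pi}$ produces the denominator in the stated expression.

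Case 2 requires a small extra remark: the additive constant $P_2(R_{P_2})$ in the expansion is analytic at $R_{P_2}$ and therefore contributes nothing to the coefficient asymptotics for $n\geq 1$, so only the $(R_{P_2}-z)^{1/2}$ term drives the leading behavior. Applying \eqref{asymptotic_inversion} with $\alpha_f=-1/2$ and $c_f=-\tfrac{\lambda}{\lambda_2}c^{(2)}_{f_1}Q(R_{P_2})$ gives a factor $n^{-3/2}/\Gamma(-1/2)$, and the identity $\Gamma(-1/2)=-2\sqrt{\pi}$ (from $\Gamma(1/2)=-\tfrac{1}{2}\Gamma(-1/2)$) makes the two minus signs cancel, producing the announced $1/(2\sqrt{\pi})$ factor. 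I do not anticipate any genuine obstacle: every piece of the work in this section is concentrated in Theorem \ref{theorem_asymptotics_P2} and the lemmas preceding it, and the corollary is essentially a bookkeeping exercise in the transfer theorem.
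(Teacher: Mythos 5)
Your proof is correct and uses essentially the same approach as the paper, namely applying the transfer formula \eqref{asymptotic_inversion} to the singular expansion of $P_2(z)$ from Theorem \ref{theorem_asymptotics_P2} and simplifying the resulting Gamma factors. The paper itself takes a small shortcut: it observes that the expansions of $R(z)$ and $P_2(z)$ in Theorems \ref{theorem_asymptotics_R} and \ref{theorem_asymptotics_P2} coincide up to an additive constant (which contributes nothing to the coefficients for $n\geq1$), so $p_2(n)\sim r(n)$ follows immediately from Corollary \ref{corollary_asymptotics_r}; your direct computation, including the remark about the harmless constant in Case~2 and the use of $\Gamma(-1/2)=-2\sqrt{\pi}$, reaches the same formulas without invoking that corollary.
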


\begin{proof}
Theorems \ref{theorem_asymptotics_R} and \ref{theorem_asymptotics_P2} show that the asymptotic behavior of $R(z)$ and $P_2(z)$ are equal, except for the constant in the second case. Since this constant has no influence on the asymptotic distribution, we find $p_2(n)\sim r(n)$.
\end{proof}

\subsection{Special Cases}

Before discussing the derived asymptotics generally, we write down the expressions of the asymptotics for some special cases.

First, the retrial queue without regular buffer space is obtained for $\lambda_1=0$. We find following corollary.

\begin{corollary}
For the classical retrial queue without regular buffer space, the asymptotics of the number of customers in orbit when the server is free, when the server is busy and unconditionally is respectively given by
\begin{align}
q(n)\sim{} & \dfrac{c_Q^{(1)}}{\Gamma\left(\dfrac{\lambda_2}{\nu}c^{(1)}_{f_1}\right)}n^{\dfrac{\lambda_2}{\nu}c^{(1)}_{f_1}-1}R_Q^{-n-\dfrac{\lambda_2}{\nu}c^{(1)}_{f_1}}, \nonumber \\
r(n)\sim{} & \dfrac{c_Q^{(1)}c^{(1)}_{f_1}}{\Gamma\left(1+\dfrac{\lambda_2}{\nu}c^{(1)}_{f_1}\right)}n^{\dfrac{\lambda_2}{\nu}c^{(1)}_{f_1}}R_R^{-n-1-\dfrac{\lambda_2}{\nu}c^{(1)}_{f_1}}, \nonumber \\
p_2(n)\sim{} & \dfrac{c_Q^{(1)}c^{(1)}_{f_1}}{\Gamma\left(1+\dfrac{\lambda_2}{\nu}c^{(1)}_{f_1}\right)}n^{\dfrac{\lambda_2}{\nu}c^{(1)}_{f_1}}R_{P_2}^{-n-1-\dfrac{\lambda_2}{\nu}c^{(1)}_{f_1}}, \nonumber
\end{align}
as $n\rightarrow\infty$. Here, $\rho_2=\lambda_2$E$[b]$, $R_Q=R_R=R_{P_2}=R^*$, $R^*$ is the smallest positive zero larger than $1$ of $B^*(\lambda_2(1-z))-z$ and
\begin{align}
c^{(1)}_{f_1}={} &  \dfrac{1-R^*}{1+\lambda_2{B^*}'(\lambda_2(1-R^*))}, \nonumber \\
c_Q^{(1)}={} & (1-\rho_2)(R_Q-1)^{\dfrac{\lambda_2}{\nu}c^{(1)}_{f_1}}\exp\left(-\dfrac{\lambda_2}{\nu}\int_{R_Q}^1\left(\dfrac{1-B^*(\lambda_2(1-z))}{B^*(\lambda_2(1-a))-x}-\dfrac{c^{(1)}_{f_1}}{R_Q-x}\right)dx\right). \nonumber
\end{align} 
\end{corollary}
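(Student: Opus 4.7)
The plan is to specialize the previously-derived general formulas to the case $\lambda_1=0$ and verify that we always fall in the first regime ($R_Q=R^*<R_h$), so that only the first line of Corollaries \ref{corollary_asymptotics_q}, \ref{corollary_asymptotics_r}, and \ref{corollary_asymptotics_p2} survives.

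First I would observe that with $\lambda_1=0$ the implicit equation defining $h(z)$ collapses to $w=B^*(\lambda_2(1-z))$, which determines $w$ explicitly with no self-dependence. Hence $h(z)=B^*(\lambda_2(1-z))$ is analytic exactly where $B^*$ is, so the singularity $R_h$ of $h$ is the point $R_h=1+R_{B^*}/\lambda_2$, and under the type-1 assumption $h(R_h)=B^*(-R_{B^*})=\infty$. Note that the general characteristic equation \eqref{deriv_functional} degenerates when $\lambda_1=0$; this is harmless, since $R_h$ is now read off directly from the singularity of $B^*$ and not from the smooth implicit-function schema.

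Next I would verify the existence of $R^*$ in $(1,R_h)$. At $z=1$ we have $h(1)-1=0$ and $(h(z)-z)'\big|_{z=1}=-\lambda_2{B^*}'(0)-1=\rho_2-1<0$ by the stability condition $\rho_2<1$. Since $h(z)-z$ is continuous on $[1,R_h)$ with $h(z)-z\to\infty$ as $z\to R_h$, the intermediate value theorem yields a smallest root $R^*\in(1,R_h)$ of $B^*(\lambda_2(1-z))-z=0$. Consequently the first case of Lemma \ref{lemma_f_1} applies, which in turn places Theorem \ref{theorem_asymptotics_Q}, Theorem \ref{theorem_asymptotics_R}, and Theorem \ref{theorem_asymptotics_P2} in their first regime, giving $R_Q=R_R=R_{P_2}=R^*$.

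Finally I would substitute $\lambda_1=0$, $\lambda=\lambda_2$, $\rho=\rho_2=\lambda_2\E[b]$ into the constants. The formula for $c_{f_1}^{(1)}$ in Lemma \ref{lemma_f_1} reduces to $(1-R^*)/(1+\lambda_2{B^*}'(\lambda_2(1-R^*)))$ since the factor $1+\lambda_1{B^*}'(\lambda(1-R_{f_1}))$ collapses to $1$. Likewise, plugging $f_1(x)=(1-B^*(\lambda_2(1-x)))/(B^*(\lambda_2(1-x))-x)$ into the expression for $c_Q^{(1)}$ from Theorem \ref{theorem_asymptotics_Q} yields the stated integral representation. The asymptotic formulas for $q(n)$, $r(n)$, and $p_2(n)$ then follow immediately by reading off the first case of the three corollaries and using $\lambda=\lambda_2$.

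The only genuine work is the intermediate-value argument showing $R^*$ exists and lies strictly below $R_h$; once that is established everything else is a mechanical specialization. A mild subtlety worth mentioning is that although the smooth implicit-function schema no longer governs $h$ when $\lambda_1=0$, the square-root behavior of $h$ is in any case irrelevant here because $h$ no longer contributes a branch singularity to the dominant pole of $f_1$ at $R^*$; the simple pole at $R^*$ of $f_1$ is driven purely by the vanishing of $h(z)-z$.
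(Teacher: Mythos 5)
Your proposal is correct and follows essentially the same route as the paper: specialize $\lambda_1=0$ so that $h(z)=B^*(\lambda_2(1-z))$ becomes explicit, observe that only the first regime of Lemma \ref{lemma_f_1} and Theorems \ref{theorem_asymptotics_Q}--\ref{theorem_asymptotics_P2} can occur, and substitute into the first case of the three corollaries. You supply one useful detail the paper leaves implicit, namely the intermediate-value argument (using $h(1)-1=0$, $(h(z)-z)'|_{z=1}=\rho_2-1<0$, and $h(z)\to\infty$ as $z\to R_h^-$) that guarantees $R^*$ exists and lies strictly below $R_h$, whereas the paper simply asserts that $R^*$ ``exists always and is dominant.''
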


\begin{proof}
For $\lambda_1=0$, $h(z)=B^*(\lambda_2(1-z))$ and is no longer implicitly defined. Therefore, the regular pole $R^*$ of $f_1(z)$ is a zero of $B^*(\lambda_2(1-z))-z$, exists always and is dominant. The expressions in the corollary follow by substituting $\lambda_1=0$ in the first cases of the respective expressions in corollaries \ref{corollary_asymptotics_q}, \ref{corollary_asymptotics_r} and \ref{corollary_asymptotics_p2}. 
\end{proof}
This corollary is consistent with the results of \cite{Kim_Ko07}.

Secondly, the non-retrial priority queue is obtained when the retrial rate goes to infinity. Note that $Q(z)=1-\rho$ in this case, since the server is free only when the system is empty. This also follows from letting $\nu\rightarrow\infty$ in expression \eqref{Q}. We therefore only look into the asymptotic distribution of the number of class-II customers (unconditionally).
\begin{corollary}
Assume the regular two-class $M/G/1$ priority queue. The asymptotics of the number of class-II (low-priority) customers in queue are given by
\begin{align}
p_2(n)\sim{} & \left\{\begin{array}{cc}
\dfrac{\lambda}{\lambda_2}(1-\rho)c^{(1)}_{f_1}R_{P_2}^{-n-1} & \mbox{ if } R_{P_2}=R^*<R_h \\
\dfrac{\lambda}{\lambda_2}(1-\rho)\dfrac{c^{(2)}_{f_1}}{2\sqrt{\pi}}n^{-3/2}R_{P_2}^{-n+1/2} & \mbox{ if } R_{P_2}=R_h \mbox{ and } R^* \mbox{ does not exist} \\
\dfrac{\lambda}{\lambda_2}(1-\rho)\dfrac{c^{(3)}_{f_1}}{\sqrt{\pi}}n^{-1/2}R_{P_2}^{-n-1/2} & \mbox{ if } R_{P_2}=R^*=R_h \\
\end{array}\right.. \nonumber
\end{align}
as $n\rightarrow\infty$, with $c_{f_1}^{(i)}$ as in Lemma \ref{lemma_f_1}.
\label{corollary_asymptotics_nu=infinity}
\end{corollary}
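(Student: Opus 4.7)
The plan is to pass to the limit $\nu\to\infty$ inside the explicit expression \eqref{P2} \emph{before} doing any singularity analysis. Since the implicit function $h(z)$ and therefore $f_1(z)$ do not depend on the retrial rate $\nu$, and the exponential prefactor $\exp(-\tfrac{\lambda}{\nu}\int_z^1 f_1(x)\,dx)$ converges to $1$ pointwise wherever the integral is finite, the limit of \eqref{P2} collapses to
\begin{align}
P_2(z) = (1-\rho)\left\{1 + \dfrac{\lambda}{\lambda_2}f_1(z)\right\}. \nonumber
\end{align}
The additive constant $(1-\rho)$ is entire and affects only $p_2(0)$, so for $n\to\infty$ the asymptotics are driven entirely by the second summand; in particular $R_{P_2}=R_{f_1}$, and the three regimes of Lemma \ref{lemma_f_1} correspond one-to-one with the three cases of the corollary.

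I would then substitute the singular expansions of $f_1$ from Lemma \ref{lemma_f_1} into that identity and invoke the transfer formula \eqref{asymptotic_inversion} case by case. The three cases reduce to a simple pole with $\alpha_f=1$ (using $\Gamma(1)=1$); a square-root branch point with $\alpha_f=-1/2$ (using $\Gamma(-1/2)=-2\sqrt{\pi}$, so the two minus signs cancel); and a square-root singularity with $\alpha_f=1/2$ (using $\Gamma(1/2)=\sqrt{\pi}$). No further bookkeeping is needed because $(1-\rho)\tfrac{\lambda}{\lambda_2}$ is the only multiplicative constant in front of $f_1(z)$ in the limiting PGF.

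The one delicate point I would dwell on is the legitimacy of passing to the limit $\nu\to\infty$ inside $P_2(z)$ rather than inside the coefficient asymptotics of Corollary \ref{corollary_asymptotics_p2}: the order of the limits $\nu\to\infty$ and $n\to\infty$ is not obviously interchangeable in Case 1, because the subexponential factor $n^{\lambda c^{(1)}_{f_1}/\nu}$ is not uniformly bounded in $n$. Working directly from the limiting PGF circumvents this, since that PGF is again analytic in a $\Delta$-domain at $R_{P_2}$ and the Flajolet--Sedgewick transfer theorems apply verbatim. As a consistency check, sending $\nu\to\infty$ formally in the constants of Corollary \ref{corollary_asymptotics_p2} does reproduce the Case 1 constant: one verifies that $c_Q^{(1)}\to 1-\rho$ (both the $(R_Q-1)^{\lambda c^{(1)}_{f_1}/\nu}$ factor and the exponential factor tend to $1$), while $\Gamma(1+\lambda c^{(1)}_{f_1}/\nu)\to 1$ and $n^{\lambda c^{(1)}_{f_1}/\nu}\to 1$ simultaneously, yielding exactly $\tfrac{\lambda}{\lambda_2}(1-\rho)c^{(1)}_{f_1}R_{P_2}^{-n-1}$.
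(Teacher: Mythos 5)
Your proof is correct, and it takes a slightly different—and arguably cleaner—route than the paper. The paper proves this corollary by formally letting $\nu\to\infty$ directly in the coefficient asymptotics \eqref{asymptotics_p2} of Corollary \ref{corollary_asymptotics_p2}, invoking Continuity Theorem IX.1 of Flajolet--Sedgewick to justify the interchange of the $n\to\infty$ and $\nu\to\infty$ limits. You instead take the limit $\nu\to\infty$ inside the PGF, obtain the closed-form limiting PGF $P_2(z)=(1-\rho)\{1+\tfrac{\lambda}{\lambda_2}f_1(z)\}$, and then re-run singularity analysis on that fixed function. This bypasses the limit-interchange issue entirely—which, as you correctly flag, is genuinely delicate in Case~1 where the subexponential factor $n^{\lambda c^{(1)}_{f_1}/\nu}$ is not uniformly bounded in $n$—and it relies only on the transfer formula \eqref{asymptotic_inversion} already established in the paper, rather than the heavier continuity machinery. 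Your consistency check (sending $\nu\to\infty$ in the Corollary~\ref{corollary_asymptotics_p2} constants and verifying they collapse to the Case~1 formula) matches what the paper's terse proof gestures at, and confirms both arguments agree. The Gamma-function bookkeeping in the three cases ($\Gamma(1)=1$, $\Gamma(-1/2)=-2\sqrt{\pi}$ with sign cancellation, $\Gamma(1/2)=\sqrt{\pi}$) is correct.
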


\begin{proof}
This lemma is obtained by letting $\nu\rightarrow\infty$ in formula \eqref{asymptotics_p2}, by using that the PGF $P_2(z)$ converges to the correct PGF of the number of class-II customers in queue for $\nu\rightarrow\infty$ and Continuity Theorem IX.1, p.\ 624 of \cite{Flajolet08}.
\end{proof}

\subsection{Observations and Discussion}

We end this section with a few observations and some discussion on the results.

\begin{itemize}
\item All obtained asymptotic distributions are power law distributions with exponential cut-off. 
\item The exponential factors of the three distributions are equal and completely determined by the dominant singularity of $f_1(z)$. More precisely, they are equal to the inverse of that dominant singularity. The dominant singularity depends on the distribution of the service times and of the arrival rates of both types of traffic. It does however not depend on the retrial rate $\nu$. As a consequence, the distribution of the number of class-II customers in the non-retrial priority queue has the exact same exponential factor as that in the retrial priority queue (see corollaries \ref{corollary_asymptotics_p2} and \ref{corollary_asymptotics_nu=infinity}).
\item The sub-exponential (power-law) factor is more involved.
\begin{itemize}
\item First, the scaling exponent of the power-law factor of $q(n)$ is one unit smaller than that of $r(n)$ and $p_2(n)$. This means that the non-exponential part of the asymptotics of the distribution of the number of type-II customers is heavier unconditionally or when the server is busy than when the server is free. This makes sense since the number of customers in orbit can only decrease in case the system is free of class-I customers. 
\item Second, for each of the distributions (let us concentrate on $p_2(n)$) different scaling exponents are possible. It is either equal to $(\lambda/\nu)c_{f_1}^{(1)}$, $-3/2$ or $-1/2$. The first depends on all parameters of the queueing system and the distribution of the service times. The other two are constant.
\item Which sub-exponential factor of the three possibilities applies depends on the dominant singularity of $f_1(z)$, just as is the case for the exponential factor. When the regular single pole $R^*$ is dominant, the scaling exponent equals $(\lambda/\nu)c_{f_1}^{(1)}$. When the square-root singularity of $h(z)$ dominates, it equals $-3/2$. When both singularities coincide, i.e., in case the square-root singularity equals the pole that makes the denominator of $f_1(z)$ equal to zero, the scaling exponent is $-1/2$. 
\item Which of the three cases applies is a function of all parameters and the distribution of the service times, except for the retrial rate $\nu$. In case $\nu=\infty$, the queueing model becomes a regular (non-retrial) priority queue. The invariance under change of $\nu$ means that the division in the three classes is identical for a retrial priority queue and a non-retrial priority queue, or in other words, if the first case applies for the non-retrial queue it also applies for the retrial queue. The regular priority case has been studied before. In the specific case of \textit{exponential} service times, which one of the three cases applies depends on the sign of
\begin{align}
D={} & \lambda_2\mu-(\lambda+\mu-2\sqrt{\lambda_1\mu})\sqrt{\lambda_1\mu}, \nonumber
\end{align}
see formula (3.1) in \cite{Li_Zhao11b}. We show the splitting of the parameterspace $(\lambda_1,\lambda_2)$ by the curve for $D=0$ in Figure \ref{tailspace} for $\mu=1$. When $D>0$ the scaling exponent equals $(\lambda/\nu)c_{f_1}^{(1)}$ ($0$ in the non-retrial case); when $D<0$ the scaling exponent is equal to $-3/2$; when $D=0$, it is equal to $-1/2$. This condition is explicit in the parameters $\lambda_1$, $\lambda_2$ and $\mu$. The relation of $D$ in terms of $\lambda_1$ is not necessarily monotonic; in terms of $\lambda_2$, however, it is monotonously increasing, since $\partial D/\partial \lambda_2=\mu-\sqrt{\lambda_1\mu}$ and positiveness of this derivative follows from the stability condition. For small $\lambda_2$, $D<0$ and the asymptotic distribution of the number of class-II customers in orbit is characterized by $R_h$, the dominant singularity of $h(z)$. We call this the \textit{priority regime}, as large type-II number of customers in the orbits are primarily caused by the priority of class-I customers over these class-II customers. In fact, in this case, the asymptotics are closely related to the asymptotics of the busy period of the high-priority customers, cf.\ \cite{Abate97}. For larger $\lambda_2$, $D$ increases and becomes positive. In this case, the asymptotic distribution of the number of class-II customers in orbit is characterized by $R^*$, the regular pole of $f_1(z)$. In case of a non-retrial priority queue ($\nu=\infty$), this leads to exponential asymptotics of the type-II customers; for the general retrial priority queue, an extra power law factor emerges, with scaling exponent depending on the retrial rate $\nu$. In this case, large numbers of class-II customers are primarily caused by the shear number of arriving type-II customers. Therefore, we call this the \textit{retrial regime}. However, we must note that the scaling exponent in this case also depends on the high-priority arrival rate, which means that the priority scheduling also plays a role in (the type of) the asymptotics. The third case is the boundary case of the former two cases. We note however that the scaling exponent in this boundary case is a constant ($-1/2$) and independent of the retrial rate $\nu$.
\begin{figure}
\centering
\includegraphics[width=0.5\textwidth]{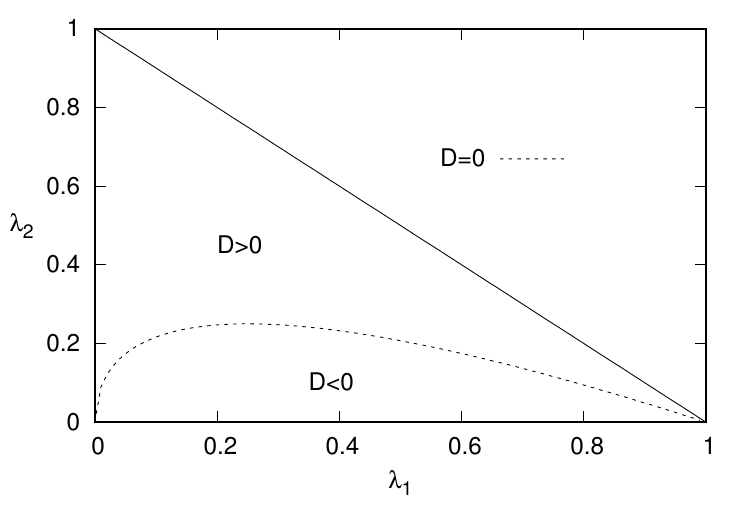}
\caption{The partitioning of the tailspace $(\lambda_1,\lambda_2)$ depending on the sign of $D$ for exponential service times with service rate $1$. The line $\lambda_1+\lambda_2=1$ indicates the stability border.}
\label{tailspace}
\end{figure}
\end{itemize}
\end{itemize}

\section{Type-2 and Type-3 Service Time Distributions} \label{power_law}

We now discuss an important subclass of service-time distributions of type 2 or type 3. For type 2 and type 3 service time distributions, the function value of the LST in its dominant singularity $-R_{B^*}$ is finite. The subclass we study is the one where the LST of the service time distribution is asymptotically equal to
\begin{align}
B^*(s)\sim{} & \sum_{j=0}^{\lfloor -\alpha_{B^*}\rfloor} \dfrac{B^{*^{(j)}}(-R_{B^*})}{j!}(s+R_{B^*})^j+c_{B^*}(s+R_{B^*})^{-\alpha_{B^*}}, \label{expansion_Bstar}
\end{align}
as $-s\rightarrow R_{B^*}$, with $R_{B^*}\geq0$ and $\alpha_{B^*}<0$ and not an integer.\footnote{We write $-s\rightarrow R_{B^*}$ instead of $s\rightarrow -R_{B^*}$ such that the singularity is approached from the left, cf.\ definition of $\sim$ in section \ref{approach}.}\footnote{The subclass can be extended somewhat. For instance, the RHS of \eqref{expansion_Bstar} can be multiplied with a slowly varying function that is merely transferred to an extra (slowly varying) factor in the asymptotics of the distributions of interest, see \cite{Flajolet08}, remark VI.5., p.\ 386.} The two types of service time distributions correspond with $R_{B^*}>0$ (type 2) and $R_{B^*}=0$ (type 3). The first leads to a power-law service time distribution with exponential cut-off and is still light-tailed. The second leads to a `pure' power-law service time distribution. We analyze both cases separately in the remainder, but first discuss the main difference with the asymptotics in case of type-1 service time distributions.

When $B^*(-R_{B^*})$ is finite (type-2 and type-3), the square-root branch-point of $h(z)$ (cf.\ Lemma \ref{lemma_h}) and the regular pole of $f_1(z)$ (cf.\ Lemma \ref{lemma_f_1}) do not necessarily exist. Instead, the singularity that is introduced by the singularity $-R_B^*$ of $B^*(s)$ can be dominant in all functions, while this singularity is never dominant in case of type-1 distributions ($R_h$ always exists in that case, cf.\ the discussion in the Appendix). Loosely speaking, when $z$ is increased, $\lambda_1(1-h(z))+\lambda_2(1-z)$ equals $-R_{B^*}$ (causing $B^*(\lambda_1(1-h(z))+\lambda_2(1-z))$ to reach its dominant singularity) \textit{before} the singularity $R_h$ of $h(z)$ or the pole $R^*$ of $f_1(z)$ is reached. In fact, in case of type-3 distributions, this singularity equals $1$ and is therefore always dominant (PGFs are analytic for $|z|<1$). 

\subsection{Type-2 Service Time Distributions}

Starting with the function $f_1(z)$, three different singularities can be dominant in this case: $R_h$, $R^*$ or $R_{h^*}$, the singularity of $h(z)$ that originates from the singularity $-R_B^*$ of $B^*(s)$. If one of the first two is dominant, the analysis of the previous section applies. Therefore, in this section, we only discuss the case that $R_{h^*}$ is the dominant singularity of $h(z)$ and $f_1(z)$.

We start with calculation of $R_{h^*}$.

\begin{lemma}
In case the service time distribution is of type 2, the dominant singularity $-R_{B^*}$ of $B^*(s)$ introduces a singularity $R_{h^*}$ in the function $h(z)$ given by
\begin{align}
R_{h^*}=1+\dfrac{\lambda_1(1+R_{B^*}-B^*(R_{B^*}))}{\lambda_2}, \nonumber
\end{align}
if $R_{h^*}\leq R_h$.
\label{lemma_Rhstar}
\end{lemma}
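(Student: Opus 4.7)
My plan is to exploit the implicit defining relation $h(z) = B^*(\lambda_1(1-h(z)) + \lambda_2(1-z))$ directly. The function $h$ can acquire singularities in two qualitatively different ways: either the implicit function theorem breaks down (which is the mechanism behind the square-root branch point $R_h$ already analysed in Lemma \ref{lemma_h}), or the inner argument of $B^*$ reaches the dominant singularity $-R_{B^*}$ of $B^*$ while $h$ itself remains finite. For type-1 service times the latter cannot happen because $B^*(-R_{B^*}) = \infty$ forces the argument to stay strictly to the right of $-R_{B^*}$ on the real axis; for type-2 service times, however, $B^*(-R_{B^*})<\infty$, so this second mechanism is available and must be investigated.

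The computational core is then very short. I would first argue that, as $z$ increases along the real axis from $1$ up to the candidate value $R_{h^*}$, the quantity $\lambda_1(1-h(z))+\lambda_2(1-z)$ is real, strictly decreasing (using $h'(1)=\rho_2/(1-\rho_1)<1/\lambda_2 \cdot (\ldots)$ and convexity of $h$ on $(1,R_h)$, facts already used in the proof of Lemma \ref{lemma_f_1}), and that $h(z)$ is real and increasing. Consequently there is at most one point at which the argument equals $-R_{B^*}$; at such a point $h$ must equal $B^*(-R_{B^*})$ by continuity of the defining relation. Substituting the pair
\begin{align*}
h(R_{h^*}) = B^*(-R_{B^*}), \qquad \lambda_1(1-h(R_{h^*}))+\lambda_2(1-R_{h^*}) = -R_{B^*}
\end{align*}
into the second equation and solving for $R_{h^*}$ yields the closed form in the statement. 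The hypothesis $R_{h^*}\leq R_h$ is exactly what guarantees that this event happens \emph{before} the smooth implicit-function schema breaks down, so the value obtained is indeed reached inside the disc of analyticity of $h$.

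It remains to argue that $R_{h^*}$ is a genuine singularity of $h$ and not merely a point where the representation happens to match $-R_{B^*}$. For this I would invoke the chain rule/composition argument from \cite{Flajolet08}, Section VI.9: since $B^*$ is singular at $-R_{B^*}$ and the inner function $g(z):=\lambda_1(1-h(z))+\lambda_2(1-z)$ is analytic and non-constant in a neighbourhood of $R_{h^*}$ (in particular $g'(R_{h^*})\neq 0$, which follows from $-\lambda_1 h'(R_{h^*})-\lambda_2 \neq 0$ by the strict monotonicity noted above, together with the fact that the implicit derivative of $h$ is finite at $R_{h^*}$ as long as $R_{h^*}<R_h$), the composition $B^*\circ g$ inherits a singularity at $R_{h^*}$; hence so does $h$.

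The main obstacle I anticipate is precisely this last conceptual step: one must rule out that the would-be singularity is cancelled or smoothed out by the implicit nature of $h$. The cleanest route is to use the implicit function theorem in reverse: on $(1,R_h)$ the derivative $h'(z)$ is finite (the schema is non-critical there), so the map $z\mapsto g(z)$ is a local biholomorphism near $R_{h^*}$, and any singularity of $B^*$ at $g(R_{h^*})=-R_{B^*}$ is transported verbatim to a singularity of $h$ at $R_{h^*}$. Everything else is routine algebra.
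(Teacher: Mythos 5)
Your proposal follows essentially the same route as the paper: write down the system $\lambda_1(1-h(R_{h^*}))+\lambda_2(1-R_{h^*})=-R_{B^*}$, $h(R_{h^*})=B^*(-R_{B^*})$ and solve linearly for $R_{h^*}$. You go further than the paper in one respect: the paper simply asserts that a solution of this system is a singularity of $h$, whereas you argue why — via the observation that if $h$ were analytic at $R_{h^*}$ then the inner argument $g(z)=\lambda_1(1-h(z))+\lambda_2(1-z)$ would be a local biholomorphism there (since $g'=-\lambda_1 h'-\lambda_2\neq0$), so $B^*\circ g$ would inherit the singularity of $B^*$ at $g(R_{h^*})=-R_{B^*}$, contradicting $h=B^*\circ g$. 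That contradiction argument is a worthwhile addition; phrase it explicitly as a contradiction to avoid the apparent circularity of assuming $g$ analytic in order to prove $h$ is not.

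One caveat worth flagging: carrying out the algebra actually gives
\begin{align}
R_{h^*} = 1 + \dfrac{R_{B^*}+\lambda_1\bigl(1-B^*(-R_{B^*})\bigr)}{\lambda_2}, \nonumber
\end{align}
which matches the displayed equation \eqref{Rhstar} inside the paper's proof but \emph{not} the form quoted in the lemma statement, where the factor $\lambda_1$ has been (erroneously) pulled out in front of $R_{B^*}$ as well. Your closing claim that solving the pair ``yields the closed form in the statement'' should therefore be checked against the proof's equation \eqref{Rhstar}, not the lemma header.
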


\begin{proof}
Since $h(z)$ is implicitly defined as $h(z)=B^*(\lambda_1(1-h(z))+\lambda_2(1-z))$, we find that $R_{h^*}$ that satisfies the set of equations 
\begin{align}
\lambda_1(1-h(R_{h^*}))+\lambda_2(1-R_{h^*})={} & -R_{B^*}, \label{-R_Bstar} \\
h(R_{h^*})={} & B^*(-R_{B^*}), \label{Bstar(-R_Bstar)} 
\end{align}
is a singularity of $h(z)$, if it is smaller than $R_h$ ($h(z)$ is not defined for $z>R_h$ if not). This set of equations can be explicitly solved and results in
\begin{align}
R_{h^*}=1+\dfrac{R_{B^*}+\lambda_1(1-B^*(R_{B^*}))}{\lambda_2}. \label{Rhstar}
\end{align}
\end{proof}

Next, we  identify the dominant singularities of $Q(z)$, $R(z)$ and $P_2(z)$ and analyze their behavior in the neighborhood of these singularities.

\begin{theorem}
When $R_{h^*}$ as defined in Lemma \ref{lemma_Rhstar} is the dominant singularity of $f_1(z)$, we find that the dominant singularities of $Q(z)$, $R(z)$ and $P_2(z)$ are all equal to $R_{h^*}$, and they asymptotically behave like
\begin{align}
Q(z)\sim{} & \sum_{j=0}^{\lfloor-\alpha_{B^*}\rfloor+1}\dfrac{(-1)^jQ^{(j)}(R_Q)}{j!}(R_Q-z)^{j} \nonumber \\
& -c_{B^*}\dfrac{\lambda}{\nu}(1-\rho)\exp\left(f_2(R_Q)\right)\dfrac{(\lambda_1h'(R_Q)+\lambda_2)^{-\alpha_{B^*}}(R_Q-1)}{(-\alpha_{B^*}+1)(h(R_Q)-R_Q)^2}(R_Q-z)^{-\alpha_{B^*}+1}, \nonumber \\
R(z)\sim{} & \sum_{j=0}^{\lfloor-\alpha_{B^*}\rfloor}\dfrac{(-1)^jR^{(j)}(R_R)}{j!}(R_R-z)^{j}+c_{B^*}\dfrac{\lambda}{\lambda_2}\dfrac{(\lambda_1h'(R_R)+\lambda_2)^{-\alpha_{B^*}}(R_R-1)}{(h(R_R)-R_R)^2}Q(R_R)(R_R-z)^{-\alpha_{B^*}}, \nonumber \\
P_2(z)\sim & \sum_{j=0}^{\lfloor-\alpha_{B^*}\rfloor}\dfrac{(-1)^jP_2^{(j)}(R_{P_2})}{j!}(R_{P_2}-z)^{j}+c_{B^*}\dfrac{\lambda}{\lambda_2}\dfrac{(\lambda_1h'(R_{P_2})+\lambda_2)^{-\alpha_{B^*}}(R_{P_2}-1)}{(h(R_{P_2})-R_{P_2})^2}Q(R_{P_2})(R_{P_2}-z)^{-\alpha_{B^*}}, \nonumber
\end{align}
as $z\rightarrow R_Q=R_R=R_{P_2}=R_{h^*}$.
\label{theorem_type2}
\end{theorem}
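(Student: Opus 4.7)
The plan is to repeat the singularity-propagation chain of Section \ref{exponential}---from $h(z)$ through $f_1(z)$ and $f_2(z)$ to $Q(z)$, then to $R(z)$ and $P_2(z)$---but with the new singularity $R_{h^*}$ of $h(z)$ inherited from the branch point $-R_{B^*}$ of $B^*$, instead of the smooth implicit-function square-root at $R_h$ or the simple pole at $R^*$ that were treated in Lemmas \ref{lemma_h} and \ref{lemma_f_1}. All three functions $Q$, $R$, $P_2$ will inherit their dominant singularity from $f_1$, and hence from $R_{h^*}$.

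First I would plug the expansion \eqref{expansion_Bstar} of $B^*$ into the implicit relation $h(z)=B^*(\lambda_1(1-h(z))+\lambda_2(1-z))$. At $z=R_{h^*}$, Lemma \ref{lemma_Rhstar} gives the argument $-R_{B^*}$ and the value $h(R_{h^*})=B^*(-R_{B^*})$; implicit differentiation of $h$ (the derivative is finite because we are away from the characteristic system \eqref{characteristic1}--\eqref{characteristic2}) yields the leading behaviour $\lambda_1(1-h(z))+\lambda_2(1-z)+R_{B^*}\sim(\lambda_1 h'(R_{h^*})+\lambda_2)(R_{h^*}-z)$. Substituting this back into \eqref{expansion_Bstar} gives $h(z)$ as an analytic Taylor polynomial around $R_{h^*}$ of order $\lfloor-\alpha_{B^*}\rfloor$ plus the singular piece $c_{B^*}(\lambda_1 h'(R_{h^*})+\lambda_2)^{-\alpha_{B^*}}(R_{h^*}-z)^{-\alpha_{B^*}}$.

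Next, I would expand $f_1=(1-h)/(h-z)$ by the first-order quotient rule (valid because the denominator $h(R_{h^*})-R_{h^*}$ is bounded away from zero in the regime $R_{h^*}<R^*$ we are studying). Collecting the two singular contributions and using the simplification $(h(R_{h^*})-R_{h^*})+(1-h(R_{h^*}))=1-R_{h^*}$ in the numerator gives $f_1(z)\sim(\text{analytic part})+\frac{c_{B^*}(\lambda_1 h'(R_{h^*})+\lambda_2)^{-\alpha_{B^*}}(R_{h^*}-1)}{(h(R_{h^*})-R_{h^*})^2}(R_{h^*}-z)^{-\alpha_{B^*}}$. Integration from $z$ to $R_{h^*}$, justified by \cite{Flajolet08}, Theorem~VI.9, shifts this singular exponent up by one and divides by $-\alpha_{B^*}+1$, producing the singular contribution of $f_2(z)$ plus a Taylor polynomial up to order $\lfloor-\alpha_{B^*}\rfloor+1$.

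Finally, since $f_2(R_{h^*})$ is finite, composing with $\exp$ is subcritical (cf.\ case~2 of Theorem \ref{theorem_asymptotics_Q}) and the singular amplitude is merely multiplied by $(1-\rho)\exp(f_2(R_Q))$, while the analytic part reorganizes as the Taylor polynomial of $Q$ around $R_Q$ up to order $\lfloor-\alpha_{B^*}\rfloor+1$; this yields the stated formula for $Q(z)$. For $R(z)=\tfrac{\lambda}{\lambda_2}f_1(z)Q(z)$ the product is dominated by the singular part of $f_1$ times $Q(R_R)$, producing the $(R_R-z)^{-\alpha_{B^*}}$ singularity with analytic remainder up to order $\lfloor-\alpha_{B^*}\rfloor$. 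For $P_2(z)=Q(z)+R(z)$ the exponent $-\alpha_{B^*}$ is strictly smaller than $-\alpha_{B^*}+1$, so the singular contribution of $R(z)$ dominates that of $Q(z)$ and survives alone, while the analytic parts assemble into the Taylor polynomial of $P_2$. The main obstacle throughout is careful bookkeeping: because $-\alpha_{B^*}$ is non-integer, one must retain exactly the right number of Taylor terms at each step and exploit the consistency identities from Lemma \ref{lemma_Rhstar} to bring the singular amplitude into the compact form displayed in the theorem.
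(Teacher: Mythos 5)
Your proposal reproduces the paper's proof step by step: obtain the singular expansion of $h$ at $R_{h^*}$ by substituting \eqref{expansion_Bstar} into the implicit relation, propagate it through $f_1$ via the quotient, then to $f_2$ by termwise integration, then to $Q$ by subcritical composition with $\exp$, and finally to $R$ and $P_2$ by multiplication and addition, with the $(R-z)^{-\alpha_{B^*}}$ term from $R(z)$ dominating in $P_2(z)$. The only cosmetic difference is in how the expansion of $h$ is justified: the paper eliminates the powers of $h(R_{h^*})-h(z)$ by a recursive substitution, whereas you observe directly that the analytic part must be $h$'s Taylor polynomial of order $\lfloor-\alpha_{B^*}\rfloor$ and fix the singular amplitude by the leading approximation $h(R_{h^*})-h(z)\sim h'(R_{h^*})(R_{h^*}-z)$; both routes give the same coefficient $c_{B^*}(\lambda_1 h'(R_{h^*})+\lambda_2)^{-\alpha_{B^*}}$.
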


\begin{proof}
We first study the behavior of $h(z)$ in the neighborhood of $R_{h^*}$. By substituting $s$ by $\lambda_1(1-h(z))+\lambda_2(1-z)$ in the expansion \eqref{expansion_Bstar} of $B^*(s)$ about $-R_{B^*}$, we can write
\begin{align}
h(z)={} & B^*(\lambda_1(1-h(z))+\lambda_2(1-z)) \nonumber \\
\sim{} & \sum_{j=0}^{\lfloor -\alpha_{B^*}\rfloor} \dfrac{B^{*^{(j)}}(-R_{B^*})}{j!}(\lambda_1(1-h(z))+\lambda_2(1-z)+R_{B^*})^j+c_{B^*}(\lambda_1(1-h(z))+\lambda_2(1-z)+R_{B^*})^{-\alpha_{B^*}} \nonumber
\end{align}
as $-\lambda_1(1-h(z))-\lambda_2(1-z)\rightarrow R_{B^*}$. Remark that the latter makes sense since $h(z)$ is an increasing function and therefore $-\lambda_1(1-h(z))-\lambda_2(1-z)$ is an increasing function as well. Furthermore, from the proof of Lemma \ref{lemma_Rhstar}, we know that $\lambda_1(1-h(R_{h^*}))+\lambda_2 (1-R_{h^*})=-R_{B^*}$, i.e., $-\lambda_1(1-h(z))+\lambda_2(1-z)\rightarrow R_{B^*}$ if $z\rightarrow R_{h^*}$. By writing $R_{B^*}$ as a function of $R_{h^*}$ and $h(R_{h^*})$, cf.\ equation \eqref{-R_Bstar}, we can write
\begin{align}
h(z)\sim{} & \sum_{j=0}^{\lfloor -\alpha_{B^*}\rfloor} \dfrac{B^{*^{(j)}}(-R_{B^*})}{j!}(\lambda_1(h(R_{h^*})-h(z))+\lambda_2(R_{h^*}-z))^j+c_{B^*}(\lambda_1(h(R_{h^*})-h(z))+\lambda_2(R_{h^*}-z))^{-\alpha_{B^*}}, \nonumber
\end{align}
or
\begin{align}
h(&R_{h^*})-h(z) \nonumber \\
& \sim{} -\sum_{j=1}^{\lfloor -\alpha_{B^*}\rfloor} \dfrac{B^{*^{(j)}}(-R_{B^*})}{j!}(\lambda_1(h(R_{h^*})-h(z))+\lambda_2(R_{h^*}-z))^j-c_{B^*}(\lambda_1(h(R_{h^*})-h(z))+\lambda_2(R_{h^*}-z))^{-\alpha_{B^*}}, \label{h(Rhstar)-h(z)}
\end{align}
as $z\rightarrow R_{h^*}$, where we used \eqref{Bstar(-R_Bstar)}. This equation can be used recursively to eliminate the powers of $h(R_{h^*})-h(z)$ in the RHS. However, the first terms will then evolve to the Taylor expansion of $h(z)$ about $h(R_{h^*})$. We can thus write
\begin{align}
h(R_{h^*})-h(z)\sim{} & -\sum_{j=1}^{\lfloor -\alpha_{B^*}\rfloor} \dfrac{(-1)^jh^{(j)}(R_{h^*})}{j!}(R_{h^*}-z)^j-c_{B^*}(\lambda_1(h(R_{h^*})-h(z))+\lambda_2(R_{h^*}-z))^{-\alpha_{B^*}}, \nonumber
\end{align}
as $z\rightarrow R_{h^*}$. Finally, to eliminate the factor $h(R_{h^*})-h(z)$ in the last term, we can use a less informative version of this equation:
\begin{align}
h(R_{h^*})-h(z)\sim{} & h'(R_{h^*})(R_{h^*}-z), \nonumber
\end{align}
as $z\rightarrow R_{h^*}$. By substituting this in the RHS of equation \eqref{h(Rhstar)-h(z)}, we find
\begin{align}
h(R_{h^*})-h(z)\sim{} & -\sum_{j=1}^{\lfloor -\alpha_{B^*}\rfloor} \dfrac{(-1)^jh^{(j)}(R_{h^*})}{j!}(R_{h^*}-z)^j-c_{B^*}(\lambda_1h'(R_{h^*})+\lambda_2)^{-\alpha_{B^*}}(R_{h^*}-z)^{-\alpha_{B^*}}, \nonumber
\end{align}
and
\begin{align}
h(z)\sim{} & \sum_{j=0}^{\lfloor -\alpha_{B^*}\rfloor} \dfrac{(-1)^jh^{(j)}(R_{h^*})}{j!}(R_{h^*}-z)^j+c_{B^*}(\lambda_1h'(R_{h^*})+\lambda_2)^{-\alpha_{B^*}}(R_{h^*}-z)^{-\alpha_{B^*}}, \label{asymptotics_h_type2}
\end{align}
as $z\rightarrow R_{h^*}$.

Next, we look at asymptotics of $f_1(z)$ in the neighborhood of $R_{f_1}=R_{h^*}$. By substituting (\ref{asymptotics_h_type2}) in (\ref{f1}), we can write
\begin{align}
f_1(z)\sim{} & \dfrac{1-h(R_{f_1})-\sum_{j=1}^{\lfloor-\alpha_{B^*}\rfloor} \dfrac{(-1)^jh^{(j)}(R_{f_1})}{j!}(R_{f_1}-z)^j-c_{B^*}(\lambda_1h'(R_{f_1})+\lambda_2)^{-\alpha_{B^*}}(R_{f_1}-z)^{-\alpha_{B^*}}}{h(R_{f_1})-R_{f_1}+\sum_{j=1}^{\lfloor -\alpha_{B^*}\rfloor} \dfrac{(-1)^jh^{(j)}(R_{f_1})}{j!}(R_{f_1}-z)^j+c_{B^*}(\lambda_1h'(R_{f_1})+\lambda_2)^{-\alpha_{B^*}}(R_{f_1}-z)^{-\alpha_{B^*}}}, \nonumber \\
\sim{} & \sum_{j=0}^{\lfloor-\alpha_{B^*}\rfloor}\dfrac{(-1)^jf_1^{(j)}(R_{f_1})}{j!}(R_{f_1}-z)^j+c_{B^*}\dfrac{(\lambda_1h'(R_{f_1})+\lambda_2)^{-\alpha_{B^*}}(R_{f_1}-1)}{(h(R_{f_1})-R_{f_1})^2}(R_{f_1}-z)^{-\alpha_{B^*}}, \label{asymptotics_f1_type2} 
\end{align}
as $z\rightarrow R_{f_1}$. This is found by expanding the denominator about $R_{f_1}$, multiplying it with the numerator and only keeping the dominant terms in the neighborhood of $R_{f_1}$.

The dominant singularity $R_{f_2}$ of $f_2(z)$ is equal to that of $f_1(z)$. We use \cite{Flajolet08}, Theorem VI.9, p.\ 420 to find
\begin{align}
f_2(z)\sim{} & \sum_{j=0}^{\lfloor-\alpha_{B^*}\rfloor+1}\dfrac{(-1)^jf_2^{(j)}(R_{f_2})}{j!}(R_{f_2}-z)^{j}-c_{B^*}\dfrac{\lambda}{\nu}\dfrac{(\lambda_1h'(R_{f_2})+\lambda_2)^{-\alpha_{B^*}}(R_{f_2}-1)}{(-\alpha_{B^*}+1)(h(R_{f_2})-R_{f_2})^2}(R_{f_2}-z)^{-\alpha_{B^*}+1}, \label{asymptotics_f2_type2}
\end{align}
as $z\rightarrow R_{f_2}$.

Finally, we can investigate the asymptotic behavior of $Q(z)$, $R(z)$ and $P_2(z)$. We start with $Q(z)$. Its dominant singularity $R_Q$ is equal to $R_{f_2}$ and by substituting \eqref{asymptotics_f2_type2} in $Q(z)=(1-\rho)\exp(f_2(z))$, we find
\begin{align}
Q(z)\sim{} & (1-\rho)\exp\left(f_2(R_Q)+\sum_{j=1}^{\lfloor-\alpha_{B^*}\rfloor+1}\dfrac{(-1)^jf_2^{(j)}(R_Q)}{j!}(R_Q-z)^{j}\right. \nonumber \\
& \left.-c_{B^*}\dfrac{\lambda}{\nu}\dfrac{(\lambda_1h'(R_Q)+\lambda_2)^{-\alpha_{B^*}}(R_Q-1)}{(-\alpha_{B^*}+1)(h(R_Q)-R_Q)^2}(R_Q-z)^{-\alpha_{B^*}+1}\right) \nonumber \\
\sim{} & (1-\rho)\exp\left(f_2(R_Q)\right)\exp\left(\sum_{j=1}^{\lfloor-\alpha_{B^*}\rfloor+1}\dfrac{(-1)^jf_2^{(j)}(R_Q)}{j!}(R_Q-z)^{j}\right) \nonumber \\
& \cdot\exp\left(-c_{B^*}\dfrac{\lambda}{\nu}\dfrac{(\lambda_1h'(R_Q)+\lambda_2)^{-\alpha_{B^*}}(R_Q-1)}{(-\alpha_{B^*}+1)(h(R_Q)-R_Q)^2}(R_Q-z)^{-\alpha_{B^*}+1}\right), \nonumber \\
\sim{} & \sum_{j=0}^{\lfloor-\alpha_{B^*}\rfloor+1}\dfrac{(-1)^jQ^{(j)}(R_Q)}{j!}(R_Q-z)^{j} \nonumber \\
& -c_{B^*}\dfrac{\lambda}{\nu}(1-\rho)\exp\left(f_2(R_Q)\right)\dfrac{(\lambda_1h'(R_Q)+\lambda_2)^{-\alpha_{B^*}}(R_Q-1)}{(-\alpha_{B^*}+1)(h(R_Q)-R_Q)^2}(R_Q-z)^{-\alpha_{B^*}+1}, \label{asymptotics_Q_type2}
\end{align}
as $z\rightarrow R_Q$. 

The dominant singularity $R_R$ of $R(z)$ is equal to $R_Q$. By substituting \eqref{asymptotics_f1_type2} and \eqref
{asymptotics_Q_type2} in $R(z)=\lambda/\lambda_2f_1(z)Q(z)$ and keeping the dominant terms, we find 
\begin{align}
R(z)\sim{} & \dfrac{\lambda}{\lambda_2}\left(\sum_{j=0}^{\lfloor-\alpha_{B^*}\rfloor}\dfrac{(-1)^jf_1^{(j)}(R_R)}{j!}(R_R-z)^j+c_{B^*}\dfrac{(\lambda_1h'(R_R)+\lambda_2)^{-\alpha_{B^*}}(R_R-1)}{(h(R_R)-R_R)^2}(R_R-z)^{-\alpha_{B^*}}\right) \nonumber \\
& \cdot\left(\sum_{j=0}^{\lfloor-\alpha_{B^*}\rfloor+1}\dfrac{(-1)^jQ^{(j)}(R_R)}{j!}(R_R-z)^{j}\right. \nonumber \\
& \left.-c_{B^*}\dfrac{\lambda}{\nu}(1-\rho)\exp\left(f_2(R_R)\right)\dfrac{(\lambda_1h'(R_R)+\lambda_2)^{-\alpha_{B^*}}(R_R-1)}{(-\alpha_{B^*}+1)(h(R_R)-R_R)^2}(R_R-z)^{-\alpha_{B^*}+1}\right) \nonumber \\
\sim{} & \sum_{j=0}^{\lfloor-\alpha_{B^*}\rfloor}\dfrac{(-1)^jR^{(j)}(R_R)}{j!}(R_R-z)^{j}+c_{B^*}\dfrac{\lambda}{\lambda_2}\dfrac{(\lambda_1h'(R_R)+\lambda_2)^{-\alpha_{B^*}}(R_R-1)}{(h(R_R)-R_R)^2}Q(R_R)(R_R-z)^{-\alpha_{B^*}}, \nonumber 
\end{align}
as $z\rightarrow R_R$.

Finally, since $P_2(z)=Q(z)+R(z)$, we have that $R_{P_2}=R_R=R_Q$ and
\begin{align}
P_2(z)\sim & \sum_{j=0}^{\lfloor-\alpha_{B^*}\rfloor}\dfrac{(-1)^jP_2^{(j)}(R_{P_2})}{j!}(R_{P_2}-z)^{j}+c_{B^*}\dfrac{\lambda}{\lambda_2}\dfrac{(\lambda_1h'(R_{P_2})+\lambda_2)^{-\alpha_{B^*}}(R_{P_2}-1)}{(h(R_{P_2})-R_{P_2})^2}Q(R_{P_2})(R_{P_2}-z)^{-\alpha_{B^*}}, \nonumber
\end{align}
as $z\rightarrow R_{P_2}$.
\end{proof}

\begin{corollary}
The asymptotics of the distributions $q(n)$, $r(n)$ and $p_2(n)$ are given by 
\begin{align}
q(n)\sim{} & c_{B^*}\dfrac{\lambda}{\nu}(1-\rho)\exp\left(f_2(R_Q)\right)\dfrac{(\lambda_1h'(R_Q)+\lambda_2)^{-\alpha_{B^*}})(R_Q-1)}{(h(R_Q)-R_Q)^2}\dfrac{n^{\alpha_{B^*}-2}}{\Gamma(\alpha_{B^*})}R_Q^{-n-\alpha_{B^*}+1}, \nonumber \\
r(n)\sim{} & c_{B^*}\dfrac{\lambda}{\lambda_2}\dfrac{(\lambda_1h'(R_R)+\lambda_2)^{-\alpha_{B^*}})(R_R-1)}{(h(R_R)-R_R)^2}Q(R_R)\dfrac{n^{\alpha_{B^*}-1}}{\Gamma(\alpha_{B^*})}R_R^{-n-\alpha_{B^*}}, \nonumber \\
p_2(n)\sim{} & c_{B^*}\dfrac{\lambda}{\lambda_2}\dfrac{(\lambda_1h'(R_{P_2})+\lambda_2)^{-\alpha_{B^*}})(R_{P_2}-1)}{(h(R_{P_2})-R_{P_2})^2}Q(R_{P_2})\dfrac{n^{\alpha_{B^*}-1}}{\Gamma(\alpha_{B^*})}R_{P_2}^{-n-\alpha_{B^*}}, \nonumber
\end{align}
as $n\rightarrow\infty$, with $R_Q=R_R=R_{P_2}=R_{h^*}$ and $R_{h^*}$ as given in \eqref{Rhstar}.
\end{corollary}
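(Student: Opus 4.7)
The proof is a direct application of the transfer formula \eqref{asymptotic_inversion} to the singular expansions of $Q(z)$, $R(z)$, and $P_2(z)$ obtained in Theorem \ref{theorem_type2}. The plan is to (i) observe that the polynomial parts of those expansions contribute nothing asymptotically, (ii) invoke \eqref{asymptotic_inversion} on each remaining singular term, and (iii) rewrite the result for $q(n)$ in the form stated in the corollary via a Gamma-function identity.

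For step (i), each expansion in Theorem \ref{theorem_type2} decomposes into a finite sum $\sum_{j=0}^{J}\frac{(-1)^{j}F^{(j)}(R)}{j!}(R-z)^{j}$ (with $F\in\{Q,R,P_2\}$ and $J=\lfloor-\alpha_{B^*}\rfloor$ or $\lfloor-\alpha_{B^*}\rfloor+1$) plus a single singular term proportional to $(R-z)^{-\alpha_{B^*}}$ or $(R-z)^{-\alpha_{B^*}+1}$. After expansion in powers of $z$, the finite sum is a polynomial in $z$ of degree at most $J$, so its Taylor coefficients $[z^{n}]$ vanish for $n>J$; hence it is absorbed into the implicit error of the $\sim$ relation and the tail asymptotics are determined entirely by the singular term.

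For steps (ii) and (iii), I apply \eqref{asymptotic_inversion} with $\alpha_f=\alpha_{B^*}$ to the singular terms of $R(z)$ and $P_2(z)$; because $\alpha_{B^*}<0$ is non-integer, the singular term is amenable, and since the prefactors in Theorem \ref{theorem_type2} already match the form required, this step immediately yields the stated expressions for $r(n)$ and $p_2(n)$. For $q(n)$ the exponent is $-\alpha_{B^*}+1$, so one takes $\alpha_f=\alpha_{B^*}-1$ in \eqref{asymptotic_inversion}, producing the sub-exponential factor $n^{\alpha_{B^*}-2}/\Gamma(\alpha_{B^*}-1)$ together with the exponential factor $R_Q^{-n-\alpha_{B^*}+1}$. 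I then use $\Gamma(\alpha_{B^*})=(\alpha_{B^*}-1)\Gamma(\alpha_{B^*}-1)$ to rewrite $1/\Gamma(\alpha_{B^*}-1)$ as $-(-\alpha_{B^*}+1)/\Gamma(\alpha_{B^*})$; the factor $-\alpha_{B^*}+1$ that appears this way cancels exactly the $(-\alpha_{B^*}+1)$ in the denominator of the coefficient of $Q$'s singular term in Theorem \ref{theorem_type2}, and the newly introduced minus sign cancels the minus sign already carried by that singular term, yielding the positive expression claimed for $q(n)$.

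The only delicate point, and hence the main thing to watch, is the sign and $\Gamma$-function bookkeeping in the $q(n)$ case; the $r(n)$ and $p_2(n)$ statements are a mechanical application of \eqref{asymptotic_inversion}.
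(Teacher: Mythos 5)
Your proposal is correct and follows essentially the same route as the paper: apply the transfer formula \eqref{asymptotic_inversion} term by term to the singular expansions from Theorem \ref{theorem_type2}, noting that the polynomial parts have vanishing Taylor coefficients for large $n$, and tidy up the $q(n)$ expression via $\Gamma(\alpha_{B^*})=(\alpha_{B^*}-1)\Gamma(\alpha_{B^*}-1)$. Your sign and Gamma-function bookkeeping for $q(n)$ is exactly the one-line remark the paper's proof makes.
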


\begin{proof}
This follows directly from the asymptotic behavior of functions $Q(z)$, $R(z)$ and $P_2(z)$ derived in Theorem \ref{theorem_type2} and the general inversion formula \eqref{asymptotic_inversion}. In the expression of $q(n)$, we further used $\Gamma(\alpha_{B^*}-1)=\Gamma(\alpha_{B^*})/(\alpha_{B^*}-1)$.
\end{proof}

\subsubsection*{Remark}
We did not treat the case where one (or both) of the singularities $R_h$ and $R^*$ exist and coincide with $R_{h^*}$. Although this is possible and will lead to different asymptotics, we deem this as less important because these cases only apply to very particular combinations of the parameters. 

\subsection{Type-3 Service Time Distributions}
 
In case $R_{B^*}=0$, $0$ is a singularity of the LST $B^*(s)$ and the service times are asymptotically distributed according to a power law. This singularity is transferred to $1$ as dominant singularity of all PGFs. We have following theorem and corresponding corollary.

\begin{theorem}
In case the service time distribution is asymptotically power law, i.e.,
\begin{align}
B^*(s)\sim{} & \sum_{j=0}^{\lfloor -\alpha_{B^*}\rfloor} \dfrac{B^{*^{(j)}}(0)}{j!}s^j+c_{B^*}s^{-\alpha_{B^*}} \nonumber
\end{align}
as $-s\rightarrow0$, with $\alpha_{B^*}<-1$ and not an integer, the dominant singularity of $Q(z)$, $R(z)$ and $P_2(z)$ is equal to $1$ and they behave as
\begin{align}
Q(z)\sim{} & \sum_{j=0}^{\lfloor-\alpha_{B^*}\rfloor}\dfrac{(-1)^jQ^{(j)}(1)}{j!}(1-z)^{j}+c_{B^*}\dfrac{\lambda}{\nu}(1-\rho)\dfrac{(\lambda_1h'(1)+\lambda_2)^{-\alpha_{B^*}}}{-{\alpha_{B^*}}(1-h'(1))^2}(1-z)^{-\alpha_{B^*}}, \nonumber \\
R(z)\sim{} & \sum_{j=0}^{\lfloor-\alpha_{B^*}\rfloor-1}\dfrac{(-1)^jR^{(j)}(1)}{j!}(1-z)^{j}-c_{B^*}\dfrac{\lambda}{\lambda_2}\dfrac{(\lambda_1h'(1)+\lambda_2)^{-\alpha_{B^*}}}{(1-h'(1))^2}(1-z)^{-\alpha_{B^*}-1}, \nonumber \\
P_2(z)\sim{} & \sum_{j=0}^{\lfloor-\alpha_{B^*}\rfloor-1}\dfrac{(-1)^jP_2^{(j)}(1)}{j!}(1-z)^{j}-c_{B^*}\dfrac{\lambda}{\lambda_2}\dfrac{(\lambda_1h'(1)+\lambda_2)^{-\alpha_{B^*}}}{(1-h'(1))^2}(1-z)^{-\alpha_{B^*}-1}, \nonumber
\end{align}
as $z\rightarrow1$. 
\label{theorem_power}
\end{theorem}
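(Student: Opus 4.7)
The proof should parallel that of Theorem \ref{theorem_type2}, specialising to $R_{B^*}=0$, but with careful attention to the cancellations that occur because $h(1)=B^*(0)=1$. First I would observe that Lemma \ref{lemma_Rhstar} (whose derivation does not use $R_{B^*}>0$) gives $R_{h^*}=1$. Since $Q$, $R$, $P_2$ are PGFs their radius of convergence is at least $1$; together with Pringsheim's theorem and the aperiodicity assumption stated in Section \ref{approach}, this already pins down $R_Q=R_R=R_{P_2}=1$ once we show the functions are genuinely singular there.

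The second step is the local expansion of $h(z)$ at $z=1$. Setting $R_{B^*}=0$, $h(1)=1$ in the recursive substitution argument used to obtain \eqref{asymptotics_h_type2} yields
\begin{align}
h(z)\sim \sum_{j=0}^{\lfloor -\alpha_{B^*}\rfloor}\dfrac{(-1)^j h^{(j)}(1)}{j!}(1-z)^j+c_{B^*}(\lambda_1 h'(1)+\lambda_2)^{-\alpha_{B^*}}(1-z)^{-\alpha_{B^*}}, \nonumber
\end{align}
as $z\to 1$. The phenomenon that distinguishes Type-3 from Type-2 is now visible: in $f_1(z)=(1-h(z))/(h(z)-z)$ the linear parts cancel in both numerator ($1-h(z)\sim h'(1)(1-z)$) and denominator ($h(z)-z\sim(1-h'(1))(1-z)$), so $f_1$ is bounded at $1$ with value $h'(1)/(1-h'(1))$, and the power-law singular correction appears with exponent shifted by one. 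A direct long division gives
\begin{align}
f_1(z)\sim \dfrac{h'(1)}{1-h'(1)}-\dfrac{c_{B^*}(\lambda_1 h'(1)+\lambda_2)^{-\alpha_{B^*}}}{(1-h'(1))^2}(1-z)^{-\alpha_{B^*}-1}+\text{(analytic corrections)}. \nonumber
\end{align}

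Next, integrating via \cite{Flajolet08}, Theorem VI.9, produces $f_2$; the singular exponent increases by $1$ back to $-\alpha_{B^*}$ and $f_2(1)=0$. Because $f_2(z)\to 0$ at the singularity, the composition $Q(z)=(1-\rho)\exp(f_2(z))$ is of the plain subcritical type (no exp-log schema is needed, contrary to Theorem \ref{theorem_asymptotics_Q}): expanding the exponential gives the claimed expansion of $Q$, with singular part $(1-\rho)$ times that of $f_2$. For $R(z)=(\lambda/\lambda_2)f_1(z)Q(z)$ one multiplies the two expansions and keeps the dominant singular term, which is the singular part of $f_1$ times $Q(1)=1-\rho$ (the other cross term has the less singular exponent $-\alpha_{B^*}$). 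Finally $P_2=Q+R$ and the singular part of $R$ (with exponent $-\alpha_{B^*}-1$) dominates that of $Q$ (with exponent $-\alpha_{B^*}$).

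The main obstacle is purely a bookkeeping one: a naive substitution $R_{B^*}=0$ into the Type-2 formulas produces several $0/0$ coefficients because $h(R_{h^*})-R_{h^*}=0$ and $R_{h^*}-1=0$ simultaneously. One must therefore redo the ratio expansion of $f_1$ directly, rather than quote the Type-2 formula, and carefully track which power of $(1-z)$ survives through the integration and through the exponentiation. Once this is done, every analytical ingredient (expansion of $h$ from its implicit definition, integral transfer, subcritical composition) is the same as in Theorem \ref{theorem_type2}, and all signs can be read off from the two elementary identities $(1-h'(1))(1-z)=h(z)-z+O(\cdot)$ and $h'(1)(1-z)=1-h(z)+O(\cdot)$.
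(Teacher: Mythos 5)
Your proof follows the paper's approach step by step: obtain $R_{h^*}=1$ from Lemma~\ref{lemma_Rhstar}, expand $h$ near $1$ by the same recursive substitution as in the Type-2 case, cancel the common factor $(1-z)$ in numerator and denominator of $f_1=(1-h)/(h-z)$ so that the singular exponent drops to $-\alpha_{B^*}-1$, integrate to obtain $f_2$ with $f_2(1)=0$, compose subcritically with the exponential to get $Q$, multiply by $(\lambda/\lambda_2)f_1$ for $R$ and keep the more singular cross term, then add $Q$ to get $P_2$. This is essentially identical to the paper's argument, and your explicit remark that one cannot blindly substitute $R_{B^*}=0$ into the Type-2 formulas (because of $0/0$ indeterminacies) is exactly the point the paper handles by redoing the ratio expansion of $f_1$. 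One incidental observation: your identification of the dominant singular coefficient of $R(z)$ as the singular part of $f_1$ times $Q(1)=1-\rho$ is correct, and matches the $Q(R_R)$ factor that appears in the Type-2 analogue (Theorem~\ref{theorem_type2}); the paper's stated Type-3 formulas for $R(z)$ and $P_2(z)$ appear to have dropped this $(1-\rho)$ factor.
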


\begin{proof}
We first study the behavior of $h(z)$ in the neighborhood of its dominant singularity. Lemma \ref{lemma_Rhstar} applies and leads to the dominant singularity $R_{h^*}=1$. In a similar way as in the first part of the proof of Theorem \ref{theorem_type2}, we find
\begin{align}
h(z)\sim{} & \sum_{j=0}^{\lfloor -\alpha_{B^*}\rfloor} \dfrac{(-1)^jh^{(j)}(1)}{j!}(1-z)^j+c_{B^*}(\lambda_1h'(1)+\lambda_2)^{-\alpha_{B^*}}(1-z)^{-\alpha_{B^*}}, \nonumber
\end{align}
as $z\rightarrow 1$.

Next, we look at the asymptotics of $f_1(z)=(1-h(z))/(h(z)-z)$ in the neighborhood of $1$. We can write
\begin{align}
f_1(z)\sim{} & \dfrac{-\sum_{j=1}^{\lfloor-\alpha_{B^*}\rfloor} \dfrac{(-1)^jh^{(j)}(1)}{j!}(1-z)^j-c_{B^*}(\lambda_1h'(1)+\lambda_2)^{-\alpha_{B^*}}(1-z)^{-\alpha_{B^*}}}{1-z+\sum_{j=1}^{\lfloor -\alpha_{B^*}\rfloor} \dfrac{(-1)^jh^{(j)}(1)}{j!}(1-z)^j+c_{B^*}(\lambda_1h'(1)+\lambda_2)^{-\alpha_{B^*}}(1-z)^{-\alpha_{B^*}}} \nonumber \\
={} & \dfrac{\sum_{j=0}^{\lfloor-\alpha_{B^*}\rfloor-1}\dfrac{(-1)^jh^{(j+1)}(1)}{(j+1)!}(1-z)^j-c_{B^*}(\lambda_1h'(1)+\lambda_2)^{-\alpha_{B^*}}(1-z)^{-\alpha_{B^*}-1}}{1-\sum_{j=0}^{\lfloor -\alpha_{B^*}\rfloor-1}\dfrac{(-1)^jh^{(j+1)}(1)}{(j+1)!}(1-z)^j+c_{B^*}(\lambda_1h'(1)+\lambda_2)^{-\alpha_{B^*}}(1-z)^{-\alpha_{B^*}-1}} \nonumber \\
\sim{} & \sum_{j=0}^{\lfloor-\alpha_{B^*}\rfloor-1}\dfrac{(-1)^jf_1^{(j)}(1)}{j!}(1-z)^j-c_{B^*}\dfrac{(\lambda_1h'(1)+\lambda_2)^{-\alpha_{B^*}}}{(1-h'(1))^2}(1-z)^{-\alpha_{B^*}-1}, \nonumber 
\end{align}
as $z\rightarrow1$. In the first step, we used that $h(1)=1$. In the second step, we canceled the common factor $(1-z)$ in numerator and denominator. In the last step, we expanded the denominator about $1$, multiplied it with the numerator and only kept the dominant terms in the neighborhood of $1$.

Next, we determine the behavior of $f_2(z)$ in the neighborhood of $1$. We have
\begin{align}
f_2(z)\sim{} & \sum_{j=0}^{\lfloor-\alpha_{B^*}\rfloor}\dfrac{(-1)^jf_2^{(j)}(1)}{j!}(1-z)^{j}+c_{B^*}\dfrac{\lambda}{\nu}\dfrac{(\lambda_1h'(1)+\lambda_2)^{-\alpha_{B^*}}}{-{\alpha_{B^*}}\cdot(1-h'(1))^2}(1-z)^{-\alpha_{B^*}}, \nonumber
\end{align}
as $z\rightarrow1$.

Finally, we can investigate the asymptotic behavior of $Q(z)$, $R(z)$ and $P_2(z)$ as $z\rightarrow1$. Since $f_2(1)=0$, we easily find the expressions in the theorem.
\end{proof}

From the asymptotic expressions of the PGFs in Theorem \ref{theorem_power}, we easily find the asymptotics of the distributions.

\begin{corollary}
In case of an asymptotically power-law distribution of the service times, the asymptotics of the distributions $q(n)$, $r(n)$ and $p_2(n)$ are given by 
\begin{align}
q(n)\sim{} & c_{B^*}\dfrac{\lambda}{\nu}(1-\rho)\dfrac{(\lambda_1h'(1)+\lambda_2)^{-\alpha_{B^*}}}{-{\alpha_{B^*}}\Gamma(\alpha_{B^*})(1-h'(1))^2}n^{\alpha_{B^*}-1}, \nonumber \\
r(n)\sim{} & c_{B^*}\dfrac{\lambda}{\lambda_2}\dfrac{(\lambda_1h'(1)+\lambda_2)^{-\alpha_{B^*}}}{-\alpha_{B^*}\Gamma(\alpha_{B^*})(1-h'(1))^2}n^{\alpha_{B^*}}, \nonumber \\
p_2(n)\sim{} & c_{B^*}\dfrac{\lambda}{\lambda_2}\dfrac{(\lambda_1h'(1)+\lambda_2)^{-\alpha_{B^*}}}{-\alpha_{B^*}\Gamma(\alpha_{B^*})(1-h'(1))^2}n^{\alpha_{B^*}}, \nonumber 
\end{align}
as $n\rightarrow\infty$.
\end{corollary}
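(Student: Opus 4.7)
The plan is to apply the transfer theorem \eqref{asymptotic_inversion} directly to the singular expansions of $Q(z)$, $R(z)$ and $P_2(z)$ established in Theorem \ref{theorem_power}. In each expansion the common dominant singularity is at $R_f=1$, so the exponential rescaling factor $R_f^{-n-\alpha_f}$ reduces to $1$. The polynomial parts $\sum_{j} \frac{(-1)^j f^{(j)}(1)}{j!}(1-z)^j$ that appear in each expansion are entire, hence contribute no asymptotic term to the Taylor coefficients; only the fractional-power singular piece survives.

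First, for $Q(z)$, the sole singular contribution is of the form $c\cdot(1-z)^{-\alpha_{B^*}}$ with exponent $-\alpha_{B^*}>1$. I would read off the leading constant $c$ from Theorem \ref{theorem_power} and then apply \eqref{asymptotic_inversion} with $\alpha_f=\alpha_{B^*}$, yielding $[z^n]Q(z)\sim c\,n^{\alpha_{B^*}-1}/\Gamma(\alpha_{B^*})$. Substituting the explicit constant and retaining the factor $1/(-\alpha_{B^*})$ already present in the singular coefficient produces the formula for $q(n)$.

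Second, for $R(z)$ and $P_2(z)$, the singular term has exponent $-\alpha_{B^*}-1$ instead, so I would apply \eqref{asymptotic_inversion} with $\alpha_f=\alpha_{B^*}+1$. This yields a leading asymptotic of order $n^{\alpha_{B^*}}/\Gamma(\alpha_{B^*}+1)$, which I would rewrite using the functional equation $\Gamma(\alpha_{B^*}+1)=\alpha_{B^*}\Gamma(\alpha_{B^*})$ to bring the expression into the form stated in the corollary. The overall minus sign appearing in the singular coefficient of $R(z)$ (and hence of $P_2(z)$) combines with the factor $1/\alpha_{B^*}$ produced by this gamma identity to give the prefactor $1/(-\alpha_{B^*}\Gamma(\alpha_{B^*}))$ in the stated asymptotics.

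There is essentially no substantive obstacle; the only mildly delicate point is to confirm that the polynomial ``regular'' parts of the singular expansions are genuinely analytic at $z=1$ and therefore asymptotically negligible, and that the transfer formula \eqref{asymptotic_inversion} is applicable termwise. Both follow from the $\Delta$-domain framework recalled in Section \ref{approach} together with the error terms implicit in the $\sim$-notation used throughout Theorem \ref{theorem_power}. Once this is noted, the corollary follows by direct substitution of the constants identified above.
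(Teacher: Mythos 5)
Your proposal is correct and follows exactly the route the paper takes: read off the singular $(1-z)^{-\alpha_{B^*}}$ and $(1-z)^{-\alpha_{B^*}-1}$ terms from Theorem \ref{theorem_power}, discard the polynomial parts (analytic at $z=1$), and apply the transfer formula \eqref{asymptotic_inversion} with $R_f=1$, using $\Gamma(\alpha_{B^*}+1)=\alpha_{B^*}\Gamma(\alpha_{B^*})$ to absorb the sign. The paper's own proof is simply a one-line assertion of the same computation.
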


%\section{Future Work}

%As future work, we plan to consider a more general model with distinct distributions of the service times of high- and low-priority customers. The joint generating function for the number of customers in the orbit and that in the normal queue have been obtained by Falin et al \cite{Falin_Artalejo_Martin93}. It will require a careful inspection of many combinations of types of distributions for the service times of both types of customers (each can be of type-1, type-2 or type-3).
%If your paper includes appendices, then precede the first of them by the command
\appendix
%and then carry on using the \section and \subsection commands, as above.

\section{The Smooth Implicit-Function Schema and the existence of $R_h$}

According to \cite{Flajolet08}, section VII.4.1, p.\ 467, the function $h(z)$ belongs to the smooth implicit-function schema if there exists a bivariate function $G(z,w)$ such that $h(z)=G(z,h(z))$ where $G(z,w)$ satisfies three conditions:
\begin{enumerate}
\item $G(z,w)=\sum_{m,n=0}^{\infty} g_{m,n} z^mw^n$ is analytic in a domain $|z|<R$ and $|w|<S$ for some $R,S>0$;
\item $g_{m,n}\geq0$, $g_{0,0}=0$, $g_{0,1}\neq1$ and $g_{m,n}>0$ for some $m$ and for some $n\geq2$;
\item There exists two numbers $r$ and $s$, such that $0<r<R$ and $0<s<S$, satisfying the system of equations
\begin{align}
G(r,s)={} & s, \nonumber \\
G_w(r,s)={} & 1. \nonumber 
\end{align}
\end{enumerate}

In our case, the first condition is satisfied for some $R,S\geq1$, since $G(z,w)=B^*(\lambda_1(1-w)+\lambda_2(1-z))$ is the joint PGF of the number of class-II and class-I arrivals in a service time. 

The second condition summarizes some technical aspects. The number $g_{m,n}$ is the probability that $m$ class-II and $n$ class-I customers arrive in a service time. Therefore, $g_{m,n}\geq0$. The number $g_{0,0}$ is clearly not 0, but this is merely a normalizing condition ($h(0)=0$ if it is satisfied). A more important condition is $g_{0,1}\neq1$, which is included to avoid that the implicit function be of reducible form $h=h+\ldots$ \cite{Flajolet08}, p.\ 468. This is clearly satisfied (if $g_{0,1}$ would be 1, all other probabilities would have to be 0 which is not the case). Furthermore, $g_{m,n}>0$ for some $m$ and $n\geq2$. In fact, because of the Poisson arrivals, $g_{m,n}>0$ for all $m$ and $n$. 

Finally, the $(r,s)$ in the third condition explicits the solution $(r_h,h(R_h))$ with $R_h$ the square-root singularity of $h(z)$ and $h(R_h)$ its function value. Let us first handle the case that the service time distribution is of type 1, i.e., $B^*(-R_{B^*})=\infty$. First $(1,1)$ is a solution of $G(z,w)=w$. Furthermore, $G_w(1,1)=\rho_1<1$. If we let $z$ increase, the point $w$ that satisfies $G(w,z)=w$ increases as well due to the implicit function theorem and due to $G_w(w,z)$ being an increasing function for $w,z\geq1$. Since $G_w(w,z)\rightarrow\infty$ eventually  when $\lambda_1(1-w)+\lambda_2(1-z)\rightarrow-R_{B^*}$, a couple $(r,s)$ must exist with $G(r,s)=s$ and $G_w(r,s)=1$. This reasoning also shows why such a solution does not necessarily exist for type-2 service time distributions. In that case, $G_w(w,z)$ can be smaller than $1$ for $w$ and $z$ that satisfy $G(w,z)=w$ and $\lambda_1(1-w)+\lambda_2(1-z)=-R_{B^*}$. In that case, $R_{h^*}$ is the dominant singularity of $h(z)$ and a solution $(r,s)$ of the set of equations $G(r,s)=s$, $G_w(r,s)=1$ does not exist. For type-3 service time  distributions, it is even guaranteed that such a solution does not exist. Here, $R_{h^*}=1$, $h(R_{h^*})=1$ and $G_w(1,1)=\rho_1<1$.

%If you include EPS (encapsulated postscript) figures in your paper,
%then please use the following commands:
%\begin{figure}
%\begin{center}
%\includegraphics{.eps}
%\caption{Caption text.}\label{}
%\end{center}
%\end{figure}

\subsubsection*{Acknowledgements}
The research of Tuan Phung-Duc is supported in part by University of Tsukuba Basic Research Support Program Type A.
% Place the text of your acknowledgements after the \acks command.
% \acks generates the heading "Acknowledgements".
% If you wish to make only one acknowledgement, use \ack.
% \ack generates the heading "Acknowledgement".

% Reference list
%
% References should be in the following form (or the BibTeX file
% apt.bst should be used):
%
% For a journal:
% Surname, Initial (year). Title of paper. {\em Journal title}
% {\bf Vol,} page--range.
%
% For a book:
% Surname, Initial (year). {\em Book title}. Publisher, Address.
%
% Note the following example of a reference list.


\begin{thebibliography}{99}
\footnotesize

\bibitem{Abate97} 
Abate, J., Whitt, W.: Asymptotics for M/G/1 low-priority waiting-time tail probabilities. Queueing Systems 25, 173-233 (1997)

\bibitem{Aguir04}
Aguir, S., Karaesmen, F., Aksin, O.Z., Chauvet, F.: The impact of retrials on call center performance. OR Spectrum 26, 353-376 (2004)

\bibitem{Artalejo_Phung13}
Artalejo, J.R., Phung-Duc, T.: Single server retrial queues with two way communication. Applied Mathematical Modelling 37, 1811-1822 (2013)

\bibitem{Choi_Park90}
Choi, B.D., Park, K. K.: The M/G/1 retrial queue with Bernoulli schedule. Queueing Systems 7, 219-227 (1990)

\bibitem{Choi_Chang99}
Choi, B.D., Chang, Y.: Single server retrial queues with priority calls. Mathematical and Computer Modelling 30, 7-32 (1999)

\bibitem{Dimitriou13}
Dimitriou, I.: A preemptive resume priority retrial queue with state dependent arrivals, unreliable server and negative customers. TOP 21, 542-571 (2013)

\bibitem{Falin_Artalejo_Martin93}
Falin, G.I., Artalejo, J.R., Martin, M.: On the single server retrial queue with priority customers. Queueing Systems 14, 439-455 (1993)

\bibitem{Flajolet08}
Flajolet, P., Sedgewick, R.: Analytic Combinatorics. Cambridge University Press, Cambridge (2008)

\bibitem{Haque_Zhao_Liu05}
Haque, L., Zhao Y.Q., Liu, L.: Sufficient conditions for a geometric tail in a QBD process with many countable levels and phases. Stochastic Models 21, 77-99 (2005)

\bibitem{He_Li_Zhao09}
He, Q.M., Li, H., Zhao, Y.Q.: Light-tailed behavior in QBD processes with countably many phases. Stochastic Models 25, 50-75 (2009)

\bibitem{Kim_Ko07}
Kim, J., Kim, B., Ko, S.S.: Tail asymptotics for the queue size distribution in an M/G/1 retrial queue. Journal of Applied Probability 44, 1111-1118 (2007)

\bibitem{Kim_Kim17}
Kim, B., Kim, J.: Waiting time distributions in an M/G/1 retrial queue with two classes of customers. Annals of Operations Research 252, 121–134 (2017)

\bibitem{Kobayashi10}
Kobayashi, M., Miyazawa M., Zhao Y.Q.: Tail asymptotics of the occupation measure for a Markov additive process with an M/G/1-type background process. Stochastic Models 26, 463-486 (2010)

\bibitem{Kobayashi_Miyazawa14}
Kobayashi, M., Miyazawa, M.: Tail asymptotics of the stationary distribution of a two-dimensional reflecting random walk with unbounded upward jumps. Advances in Applied Probability 46, 365-399 (2014)

\bibitem{Laevens98}
Laevens, K., Bruneel, H.: Discrete-time multiserver queues with priorities. Performance Evaluation 33, 249-275 (1998)

\bibitem{Li_Zhao09}
Li, H., Zhao, Y.Q.: Exact tail asymptotics in a priority queue---characterizations of the preemptive model. Queueing Systems 63, 335-381 (2009)

\bibitem{Li_Zhao11a}
Li, H., Zhao, Y.Q.: Tail asymptotics for a generalized two-demand queueing model--a kernel method. Queueing Systems 69, 77-100 (2011)

\bibitem{Li_Zhao11b}
Li, H., Zhao, Y.Q.: Exact tail asymptotics in a priority queue---characterizations of the non-preemptive model. Queueing Systems 68, 165-192 (2011)

\bibitem{Maertens07}
Maertens, T., Walraevens, J., Bruneel, H.: Priority queueing systems: from probability generating functions to tail probabilities. Queueing Systems 55, 27-39 (2007)

\bibitem{Milojevic10}
Milojevi\'c, S.: Power law distributions in information science: Making the case for logarithmic binning. Journal of the American Society for Information Science and Technology 61, 2417-2425 (2010)

\bibitem{Shang06}
Shang, W., Liu, L., Li, Q.L.: Tail asymptotics for the queue length in an M/G/1 retrial queue. Queueing Systems 52, 193-198 (2006)

\bibitem{Song15}
Song, Y., Liu, Z.M., Dai, H.S.: Exact tail asymptotics for a discrete-time preemptive priority queue. Acta Mathematicae Applicatae Sinica, English Series 31, 43-58 (2015)

\bibitem{tran-gia97}
Tran-Gia, P., Mandjes, M.: Modeling of customer retrial phenomenon in cellular mobile networks. IEEE Journal on Selected Areas in Communications 15, 1406-1414 (1997)

\bibitem{Walraevens03}
Walraevens, J., Steyaert, B., Bruneel, H.: Performance analysis of a single-server ATM queue with a priority scheduling. Computers and Operations Research 30, 1807-1829 (2003)

\bibitem{Yamamuro12}
Yamamuro, K.: The queue length in an M/G/1 batch arrival retrial queue. Queueing Systems 70, 187-205 (2012)

\bibitem{Zwart01}
Zwart, A.P.: Tail asymptotics for the busy period in the GI/GI/1 queue. Mathematics of Operations Research 26, 485-293 (2001)

\end{thebibliography}
\end{document}